\newcommand{\cN}{\mathcal{N}}
\newcommand{\cO}{\mathcal{O}}
\newcommand{\bg}{\mathbf{g}}
\newcommand{\bv}{\mathbf{v}}
\newcommand{\bx}{\mathbf{x}}
\newcommand{\bu}{\mathbf{u}}
\newcommand{\by}{\mathbf{y}}
\newcommand{\bz}{\mathbf{z}}
\newcommand{\bA}{\mathbf{A}}
\newcommand{\bB}{\mathbf{B}}
\newcommand{\bD}{\mathbf{D}}
\newcommand{\bF}{\mathbf{F}}
\newcommand{\bG}{\mathbf{G}}
\newcommand{\bR}{\mathbf{R}}
\newcommand{\bT}{\mathbf{T}}
\newcommand{\bS}{\mathbf{S}}
\newcommand{\bP}{{\mathbf{P}}}
\newcommand{\bH}{{\mathbf{H}}}
\newcommand{\bI}{{\mathbf{I}}}
\newcommand{\bW}{{\mathbf{W}}}
\newcommand{\bQ}{{\mathbf{Q}}}
\newcommand{\bbeta}{\bm{\beta}}
\newcommand{\N}{{\rm I}\kern-0.18em{\rm N}}
\newcommand{\R}{{\rm I}\kern-0.18em{\rm R}}
\newcommand{\h}{{\rm I}\kern-0.18em{\rm H}}
\newcommand{\K}{{\rm I}\kern-0.18em{\rm K}}
\newcommand{\p}{{\rm I}\kern-0.18em{\rm P}}
\newcommand{\E}{{\rm I}\kern-0.18em{\rm E}}
\newcommand{\Z}{{\rm Z}\kern-0.18em{\rm Z}}
\newcommand{\1}{{\rm 1}\kern-0.25em{\rm I}}
\newcommand{\pn}{\p_{\kern-0.25em n}}
\newcommand{\pnm}{\p_{\kern-0.25em n,m}}
\newcommand{\psubm}{\p_{\kern-0.25em m}}
\newcommand{\BigO}[1]{{\operatorname{O}}}
\DeclareMathOperator*{\argmin}{arg\,min}
\newtheorem{MyDefinition}{Definition}
\newtheorem{MyLemma}{Lemma}
\newtheorem{MyTheorem}{Theorem}
\newtheorem{MyRemark}{Remark}
\begin{document}

\title{On the Suboptimality of Proximal Gradient Descent for $\ell^{0}$ Sparse Approximation}

\author{\name Yingzhen Yang \email superyyzg@gmail.com \\
       \addr Snap Research, USA\\
       \AND
       \name Jiashi Feng \email elefjia@nus.edu.sg \\
       \addr Department of ECE, National University of Singapore, Singapore\\
       \AND
       Nebojsa Jojic \email jojic@microsoft.com \\
       Microsoft Research, USA\\
       \AND
       Jianchao Yang \email jianchao.yang@snap.com \\
       \addr Snap Research, USA\\
       \AND
       Thomas S. Huang \email t-huang1@illinois.edu \\
       \addr Beckman Institute, University of Illinois at Urbana-Champaign, USA\\
       }

\editor{}

\maketitle

\begin{abstract}
We study the proximal gradient descent (PGD) method for $\ell^{0}$ sparse approximation problem as well as its accelerated optimization with randomized algorithms in this paper. We first offer theoretical analysis of PGD showing the bounded gap between the sub-optimal solution by PGD and the globally optimal solution for the $\ell^{0}$ sparse approximation problem under conditions weaker than Restricted Isometry Property widely used in compressive sensing literature. Moreover, we propose randomized algorithms to accelerate the optimization by PGD using randomized low rank matrix approximation (PGD-RMA) and randomized dimension reduction (PGD-RDR). Our randomized algorithms substantially reduces the computation cost of the original PGD for the $\ell^{0}$ sparse approximation problem, and the resultant sub-optimal solution still enjoys provable suboptimality, namely, the sub-optimal solution to the reduced problem still has bounded gap to the globally optimal solution to the original problem.
\end{abstract} 
\section{Introduction}
In this paper, we consider the $\ell^{0}$ sparse approximation problem, also named the $\ell^{0}$ penalized Least Square Estimation (LSE) problem
\begin{small}\begin{align}\label{eq:l0-sparse-appro}
&\mathop {\min }\limits_{{\bz \in \R^n} } L(\bz) = {\|\bx - \bD \bz\|_2^2 + {\lambda}\|{\bz}\|_0}
\end{align}\end{small}%
where $\bx \in \R^d$ is a signal in $n$-dimensional Euclidean space, $\bD$ is the design matrix of dimension $d \times n$ which is also called a dictionary with $n$ atoms in the sparse coding literature. The goal of problem (\ref{eq:l0-sparse-appro}) is to approximately represent signal $\bx$ by the atoms of the dictionary $\bD$ while requiring the representation $\bz$ to be sparse. Due to the nonconvexity imposed by the $\ell^{0}$-norm, extensive existing works resort to solve its $\ell^{1}$ relaxation
\begin{small}\begin{align}\label{eq:l1-sparse-appro}
&\mathop {\min }\limits_{{\bz \in \R^n} } {\|\bx - \bD \bz\|_2^2 + {\lambda}\|{\bz}\|_1}
\end{align}\end{small}%
(\ref{eq:l1-sparse-appro}) is convex and also known as Basis Pursuit Denoising which can be solved efficiently by linear programming or iterative shrinkage algorithms \citep{Daubechies-iter-shrink-sparse04,Elad-shrinkage06,Bredies-hard-iter-shrink08}. Albeit the nonconvexity of (\ref{eq:l0-sparse-appro}), sparse coding methods such as \citep{Mancera2006,BaoJQS14} that directly optimize virtually the same objective as (\ref{eq:l0-sparse-appro}) demonstrate compelling performance compared to its $\ell^{1}$ norm counterpart in various application domains such as data mining, applied machine learning and computer vision. Cardinality constraint in terms of $\ell^{0}$-norm is also studied for M-estimation problems \citep{Jain14-IHT-Mestimation}.

We use the proximal gradient descent (PGD) method to obtain a sub-optimal solution to (\ref{eq:l0-sparse-appro}) in an iterative shrinkage manner with theoretical guarantee. Although the Iterative Hard-Thresholding (IHT) algorithm proposed by \citep{Blumensath-hard-iter-shrink2008} also features iterative shrinkage, we prove the bound for gap between the sub-optimal solution and the globally optimal solution to (\ref{eq:l0-sparse-appro}). Our result of the bounded gap only requires nonsingularity of the submatrix of $\bD$ with the columns in the support of the sub-optimal and globally optimal solution (see the subsection ``Assumptions for Our Analysis'' for details). To the best of our knowledge, this is the first result analyzing the gap between sub-optimal solution and the globally optimal solution for the important $\ell^{0}$ sparse approximation problem under assumptions weaker than Restricted Isometry Property (RIP) \citep{Candes2008}. The most related research is presented in in \citep{zhang2012}  where the distance between two local solutions of the concave regularized problems under much more restrictive assumptions including sparse eigenvalues of the dictionary. Moreover, our results suggest the merit of sparse initialization.

Furthermore, we propose to accelerate PGD for the $\ell^{0}$ sparse approximation problem by two randomized algorithms. We propose proximal gradient descent via Randomized Matrix Approximation (PGD-RMA) which employs rank-$k$ approximation of the dictionary via random projection. PGD-RMA reduces the cost of computing the gradient during the gradient descent step of PGD from $\cO(dn)$ to $\cO(dk + nk)$ by solving the reduced problem instead of the original problem with $k \ll \min\{d,n\}$, for a dictionary of size $d \times n$. The second randomized algorithm is proximal gradient descent via Randomized Dimension Reduction (PGD-RDR) which employs random projection to generate dimensionality-reduced signal and dictionary. PGD-RDR reduces the computational cost of the gradient descent step of PGD from $\cO(dn)$ to $\cO(mn)$ with $m < d$. While previous research focuses on the theoretical guarantee for convex problems via such randomized optimization \citep{Drineas2011-fast-least-square-approximation,ZhangWZ2016-sparse-linear-random-projection}, we present the gap between the sub-optimal solution to the reduced problem and the globally optimal solution to the original problem. Our result establishes provable and efficient optimization by randomized low rank matrix decomposition and randomized dimension reduction for the nonconvex and nonsmooth $\ell^{0}$ sparse approximation problem, while very few results are available in the literature in this direction.

\subsection*{Notations}
Throughout this paper, we use bold letters for matrices and vectors, regular lower letter for scalars. The bold letter with subscript indicates the corresponding element of a matrix or vector, and $\|\cdot\|_p$ denote the $\ell^{p}$-norm of a vector, or the $p$-norm of a matrix. We let $\bbeta_{\bI}$ denote the vector formed by the elements of $\bbeta$ with indices in $\bI$ when $\bbeta$ is a vector, or matrix formed by columns of $\bbeta$ with indices being the nonzero elements of $\bI$ when $\bbeta$ is a matrix. ${\rm supp}(\cdot)$ indicates the support of a vector, i.e. the set of indices of nonzero elements of this vector. $\sigma_{\min}(\cdot)$ and $\sigma_{\max}(\cdot)$ indicate the smallest and largest nonzero singular value of a matrix. 
\section{Proximal Gradient Descent for $\ell^{0}$ Sparse Approximation}

Solving the $\ell^{0}$ sparse approximation problem (\ref{eq:l0-sparse-appro}) is NP-hard in general \citep{Natarajan95-sparse-linear-system}. Therefore, the literature extensively resorts to approximate algorithms, such as Orthogonal Matching Pursuit \citep{Tropp04}, or that using surrogate functions \citep{Hyder09}, for $\ell^{0}$ problems. In addition, PGD has been used by \citep{BaoJQS14} to find the approximate solution to (\ref{eq:l0-sparse-appro}) with sublinear convergence to the critical point of the objective of (\ref{eq:l0-sparse-appro}), as well as satisfactory empirical results. The success of PGD raises the interesting question that how good the approximate solution by PGD is.

In this section, we first present the algorithm that employs PGD to optimize (\ref{eq:l0-sparse-appro}) in an iterative shrinkage manner. Then we show the suboptimality of the sub-optimal solution by PGD in terms of the gap between the sub-optimal solution and the globally optimal solution.
\subsection{Algorithm}
In $t$-th iteration of PGD for $t \ge 1$, gradient descent is performed on the squared loss term of $L(\bz)$, i.e. $Q(\bz) \triangleq \|\bx - \bD {\bz}\|_2^2$, to obtain
\begin{small}\begin{align}\label{eq:l0-sa-proximal-step1}
\tilde {\bz}^{(t)} = {\bz}^{(t-1)} - \frac{2}{{\tau}s} ({\bD^\top}{\bD}{{\bz}^{(t-1)}}-{\bD^\top}{\bx})
\end{align}\end{small}%
where $\tau$ is any constant that is greater than $1$. $s>0$ is usually chosen as the Lipschitz constant for the gradient of function $Q(\cdot)$, namely
\begin{small}\begin{align}\label{eq:lipschitz-Q}
\|\nabla Q(\by) - \nabla Q(\bz)\|_2 \le s \|\by-\bz\|_2, \,\, \forall \, \by,\bz \in \R^n
\end{align}\end{small}%
${\bz}^{(t)}$ is then the solution to the following the proximal mapping:
\begin{small}\begin{align}\label{eq:l0-sa-subprob}
&{\bz}^{(t)} = \argmin \limits_{\bv \in \R^{n}} {\frac{{\tau} s}{2}\|\bv - {\tilde {\bz}^{(t)}}\|_2^2 + {\lambda}\|\bv\|_0}
\end{align}\end{small}%
and (\ref{eq:l0-sa-subprob}) admits the closed-form solution:
\begin{small}\begin{align}\label{eq:l0-sa-proximal-step2}
&{\bz}^{(t)} = h_{\sqrt{\frac{2\lambda}{{\tau}s}}}(\tilde {\bz}^{(t)})
\end{align}\end{small}%
where $h_{\theta}$ is an element-wise hard thresholding operator:
\begin{small}\begin{align*}
[h_{\theta}(\bu)]_j=
\left\{
\begin{array}
    {r@{\quad:\quad}l}
    0 & {|\bu_j| < \theta } \\
    {\bu_j} & {\rm otherwise}
\end{array}
\right., \quad 1 \le j \le n
\end{align*}\end{small}%
The iterations start from $t=1$ and continue until the sequence $\{L({\bz}^{(t)})\}_t$ or $\{{\bz}^{(t)}\}_t$ converges or maximum iteration number is achieved. The optimization algorithm for the $\ell^{0}$ sparse approximation problem (\ref{eq:l0-sparse-appro}) by PGD is described in Algorithm~\ref{alg:pgd}. In practice, the time complexity of optimization by PGD is $\cO(Mdn)$ where $M$ is the number of iterations (or maximum number of iterations) for PGD.
\begin{algorithm}[!ht]
\renewcommand{\algorithmicrequire}{\textbf{Input:}}
\renewcommand\algorithmicensure {\textbf{Output:} }
\caption{Proximal Gradient Descent for the $\ell^{0}$ Sparse Approximation (\ref{eq:l0-sparse-appro})  }
\label{alg:pgd}
\begin{algorithmic}[1]
\REQUIRE ~~\\
The given signal $\bx \in R^d$, the dictionary $\bD$, the parameter $\lambda$ for the weight of the $\ell^{0}$-norm, maximum iteration number $M$, stopping threshold $\varepsilon$, the initialization ${\bz}^{(0)} \in \R^n$.
\STATE{Obtain the sub-optimal solution ${{\tilde \bz}}$ by the proximal gradient descent (PGD) method with (\ref{eq:l0-sa-proximal-step1}) and (\ref{eq:l0-sa-proximal-step2})} starting from $t = 1$. The iteration terminates either $\{{\bz}^{(t)}\}_t$ or $\{L({\bz}^{(t)})\}_t$ converges under certain threshold or the maximum iteration number is achieved.
\ENSURE Obtain the sparse code $\hat \bz$ upon the termination of the iterations.
\end{algorithmic}
\end{algorithm}
\subsection{Theoretical Analysis}
In this section we present the bound for the gap between the sub-optimal solution by PGD in Algorithm~\ref{alg:pgd} and the globally optimal solution for the $\ell^{0}$ sparse approximation problem (\ref{eq:l0-sparse-appro}). With proper initialization $\bz^{(0)}$, we show that the sub-optimal solution by PGD is actually a critical point of $L(\bz)$ in Lemma~\ref{lemma::PGD-convergence}, namely the sequence $\{{\bz}^{(t)}\}_t$ converges to a critical point of the objective (\ref{eq:l0-sparse-appro}). We then show that both this sub-optimal solution and the globally optimal solution to (\ref{eq:l0-sparse-appro}) are local solutions of a carefully designed capped-$\ell^{1}$ regularized problem in Lemma~\ref{lemma::equivalence-to-capped-l1}. The bound for $\ell^{2}$-distance between the sub-optimal solution and the globally optimal solution is then presented in Theorem~\ref{theorem::suboptimal-optimal}. In the following analysis, we let $\bS = {\rm supp}({\bz}^{(0)})$. Also, since $\ell^{0}$ is invariant to scaling, the original problem (\ref{eq:l0-sparse-appro}) is equivalent to $\mathop {\min }\limits_{{\bz \in \R^n} } {\|\bx - \frac{\bD}{m} \bz^{'}\|_2^2 + {\lambda}\|{\bz^{'}}\|_0}$ for $\bz^{'} = m \bz$ with $m > \max_i \|\bD^i\|_2$, so it is safe to assume $\max_i \|\bD^i\|_2 \le 1$. Without loss of generality, we let $\|\bx\|_2 \le 1$.

\begin{MyLemma}\label{lemma::shrinkage-sufficient-decrease}
(\textit{Support shrinkage and sufficient decrease of the objective function})
Choose ${\bz}^{(0)}$ such that  $\|\bx - \bD {\bz}^{(0)}\|_2 \le 1$. When $s > \max\{2|\bS|, \frac{2(1+{\lambda}|\bS|)}{\lambda \tau}\}$, then
\begin{small}\begin{align}\label{eq:support-shrinkage}
{\rm supp} ({\bz}^{(t)}) \subseteq {\rm supp} ({\bz}^{(t-1)}), t \ge 1
\end{align}\end{small}%
namely the support of the sequence $\{{\bz}^{(t)}\}_t$ shrinks. Moreover, the sequence of the objective $\{L({\bz}^{(t)})\}_t$  decreases, and the following inequality holds for $t \ge 1$:
\begin{small}\begin{align}\label{eq:l0graph-proximal-sufficient-decrease}
&L({\bz}^{(t)}) \le L({\bz}^{(t-1)}) - \frac{(\tau-1)s}{2} \|{\bz}^{(t)} - {\bz}^{(t-1)}\|_2^2
\end{align}\end{small}%
And it follows that the sequence $\{L({\bz}^{(t)})\}_t$ converges.
\end{MyLemma}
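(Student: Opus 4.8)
The plan is to establish, by a single induction on $t$, two facts simultaneously: the nesting ${\rm supp}(\bz^{(t)}) \subseteq {\rm supp}(\bz^{(t-1)}) \subseteq \cdots \subseteq {\rm supp}(\bz^{(0)}) = \bS$, and the sufficient-decrease inequality (\ref{eq:l0graph-proximal-sufficient-decrease}). The latter telescopes to $L(\bz^{(t)}) \le L(\bz^{(0)})$, and since $\|\bx - \bD\bz^{(0)}\|_2 \le 1$ and $\|\bz^{(0)}\|_0 = |\bS|$ this yields the a priori bound $Q(\bz^{(t)}) \le L(\bz^{(t)}) \le 1 + \lambda|\bS|$ for all $t \ge 0$, which feeds back into the support-shrinkage argument at the next step. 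The two facts must be bootstrapped together because of a mild circularity: support shrinkage at step $t$ will require the smallness of $Q(\bz^{(t-1)})$, hence the decrease through step $t-1$; while the sufficient-decrease estimate with the prescribed $s$ will use that the iterates never leave the coordinate subspace $\{\bz : {\rm supp}(\bz) \subseteq \bS\}$, which is precisely the nesting. The base case $t=1$ is trivial since $\bz^{(0)}$ is supported on $\bS$ and $L(\bz^{(0)}) \le L(\bz^{(0)})$.

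For the support-shrinkage half of the induction step, fix $t \ge 1$ and $j \notin {\rm supp}(\bz^{(t-1)})$. From (\ref{eq:l0-sa-proximal-step1}) and $[\bz^{(t-1)}]_j = 0$ we get $[\tilde{\bz}^{(t)}]_j = -\frac{2}{\tau s}(\bD^j)^\top(\bD\bz^{(t-1)} - \bx)$, so by Cauchy--Schwarz and $\|\bD^j\|_2 \le 1$ one has $|[\tilde{\bz}^{(t)}]_j| \le \frac{2}{\tau s}\|\bD\bz^{(t-1)} - \bx\|_2 = \frac{2}{\tau s}\sqrt{Q(\bz^{(t-1)})}$. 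The inductive bound $Q(\bz^{(t-1)}) \le 1 + \lambda|\bS|$ together with the hypothesis $s > \frac{2(1+\lambda|\bS|)}{\lambda\tau}$ (equivalently $1 + \lambda|\bS| < \frac{\lambda\tau s}{2}$) then gives $|[\tilde{\bz}^{(t)}]_j| < \sqrt{2\lambda/(\tau s)}$, so the hard-thresholding operator in (\ref{eq:l0-sa-proximal-step2}) forces $[\bz^{(t)}]_j = 0$. Hence ${\rm supp}(\bz^{(t)}) \subseteq {\rm supp}(\bz^{(t-1)}) \subseteq \bS$, i.e.\ (\ref{eq:support-shrinkage}).

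For the sufficient-decrease half, knowing now that $\bz^{(t)} - \bz^{(t-1)}$ is supported on $\bS$, I would exploit that $Q$ is quadratic: $Q(\bz^{(t)}) = Q(\bz^{(t-1)}) + \langle \nabla Q(\bz^{(t-1)}), \bz^{(t)} - \bz^{(t-1)}\rangle + \|\bD(\bz^{(t)} - \bz^{(t-1)})\|_2^2$, and the last term is at most $\sigma_{\max}(\bD_{\bS})^2\|\bz^{(t)} - \bz^{(t-1)}\|_2^2 \le |\bS|\,\|\bz^{(t)} - \bz^{(t-1)}\|_2^2 \le \frac{s}{2}\|\bz^{(t)} - \bz^{(t-1)}\|_2^2$, using $\|\bD_{\bS}\|_F^2 \le |\bS|$ and $s > 2|\bS|$. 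Separately, $\bz^{(t)}$ is a global minimizer in (\ref{eq:l0-sa-subprob}), so its objective there is no larger than at $\bz^{(t-1)}$; expanding the squared norms around $\tilde{\bz}^{(t)} = \bz^{(t-1)} - \frac{1}{\tau s}\nabla Q(\bz^{(t-1)})$ and cancelling the $\|\nabla Q(\bz^{(t-1)})\|_2^2$ terms reduces this to $\langle \nabla Q(\bz^{(t-1)}), \bz^{(t)} - \bz^{(t-1)}\rangle + \frac{\tau s}{2}\|\bz^{(t)} - \bz^{(t-1)}\|_2^2 + \lambda\|\bz^{(t)}\|_0 \le \lambda\|\bz^{(t-1)}\|_0$. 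Adding the two displays and cancelling the common inner-product term gives $L(\bz^{(t)}) \le L(\bz^{(t-1)}) - \frac{(\tau-1)s}{2}\|\bz^{(t)} - \bz^{(t-1)}\|_2^2$, which is (\ref{eq:l0graph-proximal-sufficient-decrease}); since $\tau > 1$ this also shows $\{L(\bz^{(t)})\}_t$ is non-increasing, closing the induction. Finally $L(\bz^{(t)}) = \|\bx - \bD\bz^{(t)}\|_2^2 + \lambda\|\bz^{(t)}\|_0 \ge 0$, so the monotone sequence $\{L(\bz^{(t)})\}_t$ is bounded below and hence converges.

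The step I expect to be the main obstacle is not a calculation but the bookkeeping of this joint induction: making sure the energy bound invoked for support shrinkage at iteration $t$ uses only the decrease already proven through iteration $t-1$, and that $2|\bS|$ is a legitimate curvature constant precisely because support shrinkage confines every iterate to the span of the columns of $\bD$ indexed by $\bS$. Apart from that, the argument is the standard proximal-gradient estimate combined with optimality of the proximal subproblem (\ref{eq:l0-sa-subprob}).
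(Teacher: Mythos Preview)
Your proposal is correct and follows essentially the same route as the paper: a joint induction showing support shrinkage via the bound $|[\tilde\bz^{(t)}]_j|\le \frac{2}{\tau s}\sqrt{Q(\bz^{(t-1)})}$ together with the energy estimate $Q(\bz^{(t-1)})\le 1+\lambda|\bS|$, and sufficient decrease via optimality of the proximal subproblem combined with the curvature bound $\sigma_{\max}(\bD_\bS)^2\le |\bS| < s/2$. The only cosmetic differences are that the paper treats $t=1$ separately (using the sharper bound $Q(\bz^{(0)})\le 1$) before the general inductive step, and phrases the quadratic upper bound as a Lipschitz estimate on $\nabla Q_\bS$ rather than as an exact second-order expansion; neither changes the argument.
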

\begin{MyRemark}
One can always choose ${\bz}^{(0)}$ as the optimal solution to the $\ell^{1}$ regularized problem, so $\|\bx - \bD {\bz}^{(0)}\|_2^2 + {\lambda}\|{\bz}^{(0)}\|_1 \le \|\bx\|_2^2 \le 1$, and it follows that $\|\bx - \bD {\bz}^{(0)}\|_2^2 \le 1$. Also, when not every subspace spanned by linearly independent columns of $\bD$ is orthogonal to $\bx$ (which is common in practice), we can always find ${\bz}^{(0)}$ such that  $\|\bx - \bD {\bz}^{(0)}\|_2^2 \le 1$ and ${\bz}^{(0)}$ is a nonzero vector. The support shrinkage property by Lemma~\ref{lemma::shrinkage-sufficient-decrease} shows that the original $\ell^{0}$ sparse approximation (\ref{eq:l0-sparse-appro}) is equivalent to a dictionary-reduced version
\begin{align}\label{eq:l0-sparse-appro-reduced}
&\mathop {\min }\limits_{{\bz \in \R^n} } L(\bz) = {\|\bx - \bD_{\bS} \bz\|_2^2 + {\lambda}\|{\bz}\|_0}
\end{align}
since PGD would not choose dictionary atoms outside of $\bD_{\bS}$, so the numerical computation of PGD can be improved by using $\bD_{\bS}$ as the dictionary with $s > \max\{2|\bS|, \frac{2(1+{\lambda}|\bS|)}{\lambda \tau}\}$. When $\bz^{(0)}$ is sparse, $\bD_{\bS}$ has a small number of atoms which enables fast optimization of the reduced problem (\ref{eq:l0-sparse-appro-reduced}). Also note that $s$ can be much smaller than the Lipschitz constant for the gradient of function $Q(\cdot)$ by the choice indicated by Lemma~\ref{lemma::shrinkage-sufficient-decrease}, which leads to a larger step size for the gradient descent step (\ref{eq:l0-sa-proximal-step1}).
\end{MyRemark}

The definition of critical points are defined below which is important for our analysis.

\begin{MyDefinition}
(\textit{Critical points})
Given the non-convex function $f \colon \R^n \to R \cup \{+\infty\}$ which is a proper and lower semi-continuous function.
\begin{itemize}
\item for a given $\bx \in {\rm dom}f$, its Frechet subdifferential of $f$ at $\bx$, denoted by $\tilde \partial f(x)$, is
the set of all vectors $\bu \in \R^n$ which satisfy
\begin{small}\begin{align*}
&\limsup\limits_{\by \neq \bx,\by \to \bx} \frac{f(\by)-f(\bx)-\langle \bu, \by-\bx \rangle}{\|\by-\bx\|} \ge 0
\end{align*}\end{small}%
\item The limiting-subdifferential of $f$ at $\bx \in \R^n$, denoted by written $\partial f(x)$, is defined by
\begin{small}\begin{align*}
&\partial f(x) = \{\bu \in \R^n \colon \exists \bx^k \to \bx, f(\bx^k) \to f(\bx), \tilde \bu^k \in {\tilde \partial f}(\bx^k) \to \bu\}
\end{align*}\end{small}%
\end{itemize}
The point $\bx$ is a critical point of $f$ if $0 \in \partial f(x)$.
\end{MyDefinition}

If $\bD_{\bS}$ is nonsingular, Lemma~\ref{lemma::PGD-convergence} shows that the sequences $\{{\bz}^{(t)}\}_t$ produced by PGD converges to a critical point of $L(\bz)$, the objective of the $\ell^{0}$ sparse approximation problem (\ref{eq:l0-sparse-appro}).

\begin{MyLemma}\label{lemma::PGD-convergence}
With $\bz^{(0)}$ and $s$ in Lemma~\ref{lemma::shrinkage-sufficient-decrease}, if $\bD_{\bS}$ is nonsingular, then the sequence $\{{\bz}^{(t)}\}_t$ generated by PDG with (\ref{eq:l0-sa-proximal-step1}) and (\ref{eq:l0-sa-proximal-step2}) converges to a critical point of $L(\bz)$.
\end{MyLemma}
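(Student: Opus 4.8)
The plan is to convert the support-shrinkage property of Lemma~\ref{lemma::shrinkage-sufficient-decrease} into a stabilization of the support after finitely many iterations; once the support has stabilized, the PGD recursion (\ref{eq:l0-sa-proximal-step1})--(\ref{eq:l0-sa-proximal-step2}) collapses into ordinary gradient descent on a strongly convex quadratic, so that convergence of the whole sequence $\{\bz^{(t)}\}_t$ and the identity of its limit follow from elementary linear algebra; it then remains only to verify that the limit is a critical point of $L$.

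First I would observe that by (\ref{eq:support-shrinkage}) the sets ${\rm supp}(\bz^{(t)})$ form a nonincreasing chain of subsets of the finite set $\bS$, hence are eventually constant: there exist $T\ge 0$ and $\bS^\ast\subseteq\bS$ with ${\rm supp}(\bz^{(t)})=\bS^\ast$ for all $t\ge T$. For $t\ge T+1$ and $j\in\bS^\ast$ the coordinate $\bz^{(t)}_j$ is nonzero, so by the definition of the hard-thresholding operator $h_\theta$ it is left unchanged, $\bz^{(t)}_j=\tilde\bz^{(t)}_j$; and since $\bz^{(t-1)}$ is supported on $\bS^\ast$, restricting (\ref{eq:l0-sa-proximal-step1}) to the rows indexed by $\bS^\ast$ yields, writing $\mathbf{w}^{(t)}:=\bz^{(t)}_{\bS^\ast}$, $\bA:=\bD_{\bS^\ast}^\top\bD_{\bS^\ast}$ and $\bb:=\bD_{\bS^\ast}^\top\bx$, the affine recursion $\mathbf{w}^{(t)}=\mathbf{w}^{(t-1)}-\tfrac{2}{\tau s}(\bA\mathbf{w}^{(t-1)}-\bb)$, which is precisely gradient descent with step size $\tfrac{1}{\tau s}$ on the quadratic $\mathbf{w}\mapsto\|\bx-\bD_{\bS^\ast}\mathbf{w}\|_2^2$. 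Since $\bD_{\bS}$ is nonsingular and $\bS^\ast\subseteq\bS$, the submatrix $\bD_{\bS^\ast}$ has full column rank, so $\bA\succ 0$; moreover $\sigma_{\max}(\bA)\le\|\bD_{\bS^\ast}\|_F^2\le|\bS^\ast|\le|\bS|<\tau s$, the last inequality because $s>2|\bS|$ and $\tau>1$. Hence every eigenvalue of $\bI-\tfrac{2}{\tau s}\bA$ lies in $(0,1)$, the recursion matrix is a contraction, and $\mathbf{w}^{(t)}$ converges geometrically to the unique fixed point $\mathbf{w}^\ast=\bA^{-1}\bb=(\bD_{\bS^\ast}^\top\bD_{\bS^\ast})^{-1}\bD_{\bS^\ast}^\top\bx$. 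Consequently $\bz^{(t)}\to\bz^\ast$, where $\bz^\ast$ equals $\mathbf{w}^\ast$ on $\bS^\ast$ and vanishes outside $\bS^\ast$.

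To finish I would verify $\bzero\in\partial L(\bz^\ast)$. Because $Q$ is $C^1$, the exact sum rule gives $\partial L(\bz^\ast)=\nabla Q(\bz^\ast)+\lambda\,\partial\|\cdot\|_0(\bz^\ast)$, and a short computation from the definition of the subdifferential shows that at a point whose support is $\bS^\ast$ one has $\partial\|\cdot\|_0(\bz^\ast)=\{\bu\in\R^n:\bu_{\bS^\ast}=\bzero\}$: a perturbation confined to $\bS^\ast$ keeps $\|\cdot\|_0$ constant and forces $\bu_{\bS^\ast}=\bzero$, whereas turning on a coordinate outside $\bS^\ast$ sends the relevant difference quotient to $+\infty$ and imposes no further restriction. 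Therefore $\bzero\in\partial L(\bz^\ast)$ is equivalent to $\big(\nabla Q(\bz^\ast)\big)_{\bS^\ast}=\bzero$, i.e. $\bD_{\bS^\ast}^\top\bD_{\bS^\ast}\mathbf{w}^\ast-\bD_{\bS^\ast}^\top\bx=\bzero$, which holds exactly because $\mathbf{w}^\ast=\bA^{-1}\bb$; and if $\mathbf{w}^\ast$ happens to have a zero entry (so that ${\rm supp}(\bz^\ast)$ is strictly smaller than $\bS^\ast$) the same conclusion applies, since a vector that vanishes on $\bS^\ast$ also vanishes on ${\rm supp}(\bz^\ast)$.

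I expect the main obstacle to lie in the careful passage from the mere inclusion (\ref{eq:support-shrinkage}) to an exact equality of supports from some iteration onward, together with the observation that this makes $h_\theta$ act as the identity on the coordinates in $\bS^\ast$ --- this is what legitimately replaces the nonsmooth PGD step by a plain gradient step and lets the linear-algebra argument take over. The subdifferential computation for $\|\cdot\|_0$ is routine but needs to be done precisely (using the exact sum rule, valid because $Q$ is $C^1$). As an alternative to this explicit computation one could remark that $L$ is semialgebraic, hence satisfies the Kurdyka--{\L}ojasiewicz inequality, and combine the sufficient-decrease estimate (\ref{eq:l0graph-proximal-sufficient-decrease}) with the Attouch--Bolte--Svaiter convergence theorem to obtain convergence to a critical point; the elementary argument above, however, has the advantage of also identifying the limit explicitly.
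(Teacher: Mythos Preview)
Your argument is correct, but it proceeds along quite different lines from the paper's own proof. The paper first establishes that $\{\bz^{(t)}\}_t$ is bounded---using $L(\bz^{(t)})\le 1+\lambda|\bS|$ together with the nonsingularity of $\bD_{\bS}$ to get $\|\bz^{(t)}\|_2\le (1+\sqrt{1+\lambda|\bS|})/\sigma_{\min}(\bD_{\bS})$---and then invokes the Kurdyka--{\L}ojasiewicz machinery (Theorem~1 of \citet{BoltePAL2014}) via the semi-algebraicity of $\|\cdot\|_0$ to conclude convergence to a critical point. In other words, the paper takes exactly the ``alternative'' you sketch in your closing paragraph. Your route is more elementary: you exploit the finite chain of supports from Lemma~\ref{lemma::shrinkage-sufficient-decrease} to freeze the active set after finitely many steps, reduce to plain gradient descent on a positive-definite quadratic, and verify criticality of the explicit limit $(\bD_{\bS^{\ast}}^{\top}\bD_{\bS^{\ast}})^{-1}\bD_{\bS^{\ast}}^{\top}\bx$ by computing $\partial\|\cdot\|_0$ directly. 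This buys you an explicit identification of $\hat\bz$ (and in fact shows $|\hat\bz_j|\ge\sqrt{2\lambda/(\tau s)}$ on its support, so your caveat about $\mathbf{w}^{\ast}$ having a zero entry cannot actually occur), at the cost of being specific to this quadratic-plus-$\ell^0$ structure, whereas the KL argument the paper uses is more general. Two minor points: your chain $\sigma_{\max}(\bA)\le|\bS|<\tau s$ is not quite what you need for eigenvalues in $(0,1)$---you need $|\bS|<\tau s/2$, which does follow from $s>2|\bS|$ and $\tau>1$; and your symbols $\bS^{\ast}$, $\bz^{\ast}$ collide with the paper's notation for the support and value of the global optimum, so you should rename them.
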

Denote the critical point of $L(\bz)$ by ${\hat \bz}$ that the sequence $\{{\bz}^{(t)}\}_t$ converges to when the assumption of Lemma~\ref{lemma::PGD-convergence} holds, and denote by ${\bz}^*$ the globally optimal solution to the $\ell^{0}$ sparse approximation problem (\ref{eq:l0-sparse-appro}).
Also, we consider the following capped-$\ell^{1}$ regularized problem, which replaces the noncontinuous $\ell^{0}$-norm with the continuous capped-$\ell^{1}$ regularization term $R$:
\begin{small}\begin{align}\label{eq:capped-l1-problem}
&\mathop {\min }\limits_{{\bbeta \in \R^n}} L_{{\rm capped-}\ell^{1}}(\bbeta) = \|\bx - \bD \bbeta\|_2^2 + \bR(\bbeta;b)
\end{align}\end{small}%
where $\bR(\bbeta;b) = \sum\limits_{j=1}^n R(\bbeta_j;b)$, $R(t;b) = {\lambda}\frac{\min\{|t|,b\}}{b}$ for some $b > 0$. It can be seen that $R(t;b)$ approaches the $\ell^{0}$-norm when $b \to 0+$. Our following theoretical analysis aims to obtain the gap between ${\hat \bz}$ and ${\bz}^*$. For the sake of this purpose, the definition of local solution and degree of nonconvexity of a regularizer are necessary and presented below.
\begin{MyDefinition}
(\textit{Local solution})
A vector $\tilde \bbeta$ is a local solution to the problem (\ref{eq:capped-l1-problem}) if
\begin{small}\begin{align}\label{eq:cond-local-solution}
&\| 2{\bD^{\top}}({\bD} {\tilde \bbeta} - \bx ) + {\dot \bR} (\tilde \bbeta;b)\|_2  = 0
\end{align}\end{small}%
where ${\dot \bR(\tilde \bbeta;b) = [\dot R(\tilde \bbeta_1;b),\dot R(\tilde \bbeta_2;b),\ldots,\dot R(\tilde \bbeta_n;b) ]^{\top}}$.
\end{MyDefinition}

Note that in the above definition and the following text, $\dot R(t;b)$ can be chosen as any value between the right differential $\frac{\partial R}{\partial t}(t+;b)$ (or ${\dot R(t+;b)}$) and left differential $\frac{\partial R}{\partial t}(t-;b)$ (or ${\dot R(t-;b)}$).

\begin{MyDefinition}\label{def::degree-nonconvexity}
(\textit{Degree of nonconvexity of a regularizer})
For $\kappa \geq 0$ and $t \in \R$, define
  \[
  \theta(t,\kappa):= \sup_s \{ -{\rm sgn}(s-t) ({\dot P}(s;b) - {\dot P}(t;b)) - \kappa |s-t|\}
  \]
  as the degree of nonconvexity for function $P$.
  If $\bu =(u_1,\ldots,u_n)^\top\in \R^n$, $\theta(\bu,\kappa)=[\theta(u_1,\kappa),\ldots,\theta(u_n,\kappa)]$. $sgn$ is a sign function.
\end{MyDefinition}
Note that $\theta(t,\kappa) = 0$ if $P$ is a convex function.

Let ${\hat \bS} = {\rm supp}( {\hat \bz})$, ${\bS^*} = {\rm supp}( {\bz}^*)$, the following lemma shows that both $ {\hat \bz}$ and ${\bz}^*$ are local solutions to the capped-$\ell^{1}$ regularized problem (\ref{eq:capped-l1-problem}).
\begin{MyLemma}\label{lemma::equivalence-to-capped-l1}
With $\bz^{(0)}$ and $s$ in Lemma~\ref{lemma::shrinkage-sufficient-decrease}, if
\begin{small}\begin{align}\label{eq:b-cond}
&0< b < \min\{\min_{j \in {\hat \bS}} | {\hat \bz}_j|, \frac{\lambda}{ \max_{j \notin {\hat \bS}} |\frac{\partial Q}{\partial {\bz_j}}|_{\bz =  {\hat \bz}}|},
\min_{j \in {\bS^*}} | \bz_j^*|, \frac{\lambda}{ \max_{j \notin {\bS^*}} |\frac{\partial Q}{\partial {\bz_j}}|_{\bz = {\bz}^*}|}  \}
\end{align}\end{small}%
(if the denominator is $0$, $\frac{\lambda}{0}$ is defined to be $+\infty$ in the above inequality),
then both $ {\hat \bz}$ and ${\bz}^*$ are local solutions to the capped-$\ell^{1}$ regularized problem (\ref{eq:capped-l1-problem}).
\end{MyLemma}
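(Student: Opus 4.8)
The plan is to verify the defining identity (\ref{eq:cond-local-solution}) of a local solution of (\ref{eq:capped-l1-problem}) separately at $\tilde\bbeta = \hat\bz$ and at $\tilde\bbeta = \bz^*$, coordinate by coordinate, exploiting that the bound (\ref{eq:b-cond}) on $b$ forces the capped-$\ell^{1}$ penalty $R(\cdot;b)$ to ``behave like'' the $\ell^{0}$ penalty at both points. Write $\nabla_j Q(\bz)$ for the $j$-th component of $\nabla Q(\bz) = 2\bD^\top(\bD\bz - \bx)$, so that (\ref{eq:cond-local-solution}) amounts to the system $\nabla_j Q(\tilde\bbeta) + \dot R(\tilde\bbeta_j;b) = 0$, $1 \le j \le n$.

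First I would record the first-order information. Both $\hat\bz$ and $\bz^*$ are critical points of $L = Q + \lambda\|\cdot\|_0$: $\hat\bz$ by Lemma~\ref{lemma::PGD-convergence}, and $\bz^*$ because a global minimizer satisfies $0 \in \partial L(\bz^*)$ by Fermat's rule. Since $Q$ is $C^1$, the exact sum rule gives $\partial L(\bz) = \nabla Q(\bz) + \lambda\,\partial\|\cdot\|_0(\bz)$, and the limiting subdifferential of $\|\cdot\|_0$ at $\bz$ is $\{\bu : \bu_j = 0 \text{ for } j \in {\rm supp}(\bz)\}$ --- perturbations preserving the support force a zero component, while perturbations enlarging it send the defining $\limsup$ to $+\infty$ and impose nothing. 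Hence $0 \in \partial L(\hat\bz)$ is equivalent to $\nabla_j Q(\hat\bz) = 0$ for every $j \in \hat\bS$, and likewise $\nabla_j Q(\bz^*) = 0$ for every $j \in \bS^*$, with no constraint off the respective supports. (For $\bz^*$ this also follows elementarily: if $\bz^\sharp$ minimizes $Q$ over $\{\bz : {\rm supp}(\bz) \subseteq \bS^*\}$, then $L(\bz^\sharp) \le Q(\bz^*) + \lambda|\bS^*| = L(\bz^*)$, so global optimality forces equality, whence $\bz^*$ solves the normal equations on $\bS^*$.)

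Next I would substitute into the $j$-th equation for $\tilde\bbeta = \hat\bz$. If $j \in \hat\bS$, the bound $b < \min_{j \in \hat\bS}|\hat\bz_j|$ places $\hat\bz_j$ on the flat part $\{|t| > b\}$ of $R(\cdot;b)$, so $\dot R(\hat\bz_j;b) = 0$ and the equation reduces to $\nabla_j Q(\hat\bz) = 0$, which holds. If $j \notin \hat\bS$, then $\hat\bz_j = 0$, where $\dot R(0;b)$ may be taken to be any value in $[-\lambda/b,\,\lambda/b]$ (the left and right differentials of $R(\cdot;b)$ at $0$); since $b < \lambda / \max_{j \notin \hat\bS}|\nabla_j Q(\hat\bz)|$ we have $|\nabla_j Q(\hat\bz)| < \lambda/b$, so the admissible choice $\dot R(0;b) = -\nabla_j Q(\hat\bz)$ makes the equation hold. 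Summing the squared components over $j$ gives $\|2\bD^\top(\bD\hat\bz - \bx) + \dot\bR(\hat\bz;b)\|_2 = 0$, so $\hat\bz$ is a local solution of (\ref{eq:capped-l1-problem}). The argument for $\bz^*$ is word for word the same, using the other two terms in the minimum of (\ref{eq:b-cond}); when a denominator there vanishes (convention $\lambda/0 := +\infty$) the corresponding term imposes no restriction, which is consistent since then the relevant off-support component of $\nabla Q$ is zero and $\dot R(0;b) = 0$ already works.

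All the per-coordinate checks are routine quadratic/threshold bookkeeping; the one genuinely delicate point is the subdifferential argument of the second paragraph --- justifying that a critical point of $L$ (respectively a global minimizer) must annihilate $\nabla Q$ \emph{exactly} on its support, not merely in some relaxed sense. This is the hinge of the whole coordinate-wise verification, and it is where the nonsmoothness of $\|\cdot\|_0$ and the one-sided freedom in the choice of $\dot R$ have to be reconciled.
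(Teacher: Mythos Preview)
Your proposal is correct and follows essentially the same approach as the paper: a coordinate-by-coordinate verification of (\ref{eq:cond-local-solution}), using that on the support the gradient of $Q$ vanishes (by criticality for $\hat\bz$, by optimality for $\bz^*$) and $\dot R$ is zero since $|\cdot|>b$, while off the support the freedom in $\dot R(0;b)\in[-\lambda/b,\lambda/b]$ absorbs $\nabla_j Q$. The only difference is that you justify the on-support vanishing of $\nabla Q$ via an explicit computation of $\partial\|\cdot\|_0$ and the sum rule (and an alternative normal-equations argument for $\bz^*$), whereas the paper argues it more informally for $\hat\bz$ and via a direct perturbation argument for $\bz^*$; your treatment is slightly more careful but not substantively different.
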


\begin{MyTheorem}\label{theorem::suboptimal-optimal}
(\textit{Sub-optimal solution is close to the globally optimal solution})
With $\bz^{(0)}$ and $s$ in Lemma~\ref{lemma::shrinkage-sufficient-decrease}, and suppose $\bD_{\hat \bS \cup \bS^*}$ is not singular with $\kappa_0 \triangleq \sigma_{\min}(\bD_{\hat \bS \cup \bS^*}) > 0$. When $\kappa_0^2 > \kappa > 0$ and $b$ is chosen according to (\ref{eq:b-cond}) as in Lemma~\ref{lemma::equivalence-to-capped-l1} ,let $\bF = ({\hat \bS} \setminus \bS^*) \cup (\bS^* \setminus {\hat \bS})$ be the symmetric difference between $\hat \bS$ and $\bS^*$, then
\begin{small}\begin{align}\label{eq:suboptimal-optimal}
&\|{\hat \bz} - \bz^*\|_2 \le \frac{\big(\sum\limits_{j \in \bF \cap \hat \bS} (\max\{0,\frac{\lambda}{b} - {\kappa} |{\hat \bz}_j - b|\})^2 +
 \sum\limits_{j \in \bF \setminus \hat \bS} (\max\{0,\frac{\lambda}{b} - {\kappa} b\})^2 \big)^{\frac{1}{2}}}{2\kappa_0^2-\kappa}
\end{align}\end{small}%
\end{MyTheorem}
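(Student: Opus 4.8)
The plan is to compare $\hat{\bz}$ and $\bz^*$ through the first-order optimality conditions for the capped-$\ell^{1}$ problem (\ref{eq:capped-l1-problem}). By Lemma~\ref{lemma::equivalence-to-capped-l1}, with suitable selections of the differentials $\dot{R}$ at the zero coordinates, both $2\bD^{\top}(\bD\hat{\bz} - \bx) + \dot{\bR}(\hat{\bz};b) = \bzero$ and $2\bD^{\top}(\bD\bz^* - \bx) + \dot{\bR}(\bz^*;b) = \bzero$ hold. Subtracting these two identities and taking the inner product with $\hat{\bz} - \bz^*$ yields
\begin{small}\begin{align*}
2\|\bD(\hat{\bz} - \bz^*)\|_2^2 = -\big\langle \dot{\bR}(\hat{\bz};b) - \dot{\bR}(\bz^*;b),\ \hat{\bz} - \bz^* \big\rangle .
\end{align*}\end{small}
Since ${\rm supp}(\hat{\bz} - \bz^*) \subseteq \hat{\bS} \cup \bS^*$ and $\bD_{\hat \bS \cup \bS^*}$ is nonsingular with smallest nonzero singular value $\kappa_0$, the left-hand side is at least $2\kappa_0^2 \|\hat{\bz} - \bz^*\|_2^2$, and it remains to upper bound the right-hand side coordinate-wise.

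I would split the coordinates into three groups. If $j \notin \hat{\bS} \cup \bS^*$, then $\hat{\bz}_j = \bz^*_j = 0$ and the $j$-th term vanishes. If $j \in \hat{\bS} \cap \bS^*$, the choice of $b$ in (\ref{eq:b-cond}) forces $|\hat{\bz}_j| > b$ and $|\bz^*_j| > b$, hence $\dot{R}(\hat{\bz}_j;b) = \dot{R}(\bz^*_j;b) = 0$ and the $j$-th term is \emph{exactly} zero --- this is the step that confines the whole estimate to the symmetric difference $\bF$. For $j \in \bF$, Definition~\ref{def::degree-nonconvexity} applied with $t = \hat{\bz}_j$, $s = \bz^*_j$, after multiplying through by $|\hat{\bz}_j - \bz^*_j|$, gives
\begin{small}\begin{align*}
-\big( \dot{R}(\hat{\bz}_j;b) - \dot{R}(\bz^*_j;b) \big)(\hat{\bz}_j - \bz^*_j) \ \le \ \theta(\hat{\bz}_j,\kappa)\,|\hat{\bz}_j - \bz^*_j| + \kappa\,|\hat{\bz}_j - \bz^*_j|^2 .
\end{align*}\end{small}
Summing over $j \in \bF$, absorbing the quadratic terms into $\kappa\|\hat{\bz} - \bz^*\|_2^2$ and applying Cauchy--Schwarz to the linear part, this gives $2\kappa_0^2\|\hat{\bz} - \bz^*\|_2^2 \le \big( \sum_{j \in \bF} \theta(\hat{\bz}_j,\kappa)^2 \big)^{1/2} \|\hat{\bz} - \bz^*\|_2 + \kappa\|\hat{\bz} - \bz^*\|_2^2$. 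Since $\kappa_0^2 > \kappa$ ensures $2\kappa_0^2 - \kappa > 0$, dividing through by $\|\hat{\bz} - \bz^*\|_2$ (the case $\hat{\bz} = \bz^*$ being trivial) gives $\|\hat{\bz} - \bz^*\|_2 \le \big( \sum_{j \in \bF} \theta(\hat{\bz}_j,\kappa)^2 \big)^{1/2} / (2\kappa_0^2 - \kappa)$.

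The last step is to evaluate the degree of nonconvexity of the capped-$\ell^{1}$ penalty at the coordinates of $\hat{\bz}$. Using that $\dot{R}(\cdot;b)$ equals $\pm\lambda/b$ on $(-b,b)\setminus\{0\}$ and $0$ on $\R \setminus [-b,b]$, a short case analysis of the supremum in Definition~\ref{def::degree-nonconvexity} shows that $\theta(\hat{\bz}_j,\kappa) = \max\{0,\ \frac{\lambda}{b} - \kappa|\hat{\bz}_j - b|\}$ for $j \in \bF \cap \hat{\bS}$ (where $|\hat{\bz}_j| > b$), while for $j \in \bF \setminus \hat{\bS}$ (where $\hat{\bz}_j = 0$ and the selected value $\dot{R}(0;b)$ satisfies $|\dot{R}(0;b)| \le \lambda/b$ by (\ref{eq:b-cond})) one has $\theta(\hat{\bz}_j,\kappa) = \theta(0,\kappa) \le \max\{0,\ \frac{\lambda}{b} - \kappa b\}$. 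Substituting these bounds and splitting $\sum_{j \in \bF}$ into the parts over $\bF \cap \hat{\bS}$ and $\bF \setminus \hat{\bS}$ produces exactly (\ref{eq:suboptimal-optimal}).

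I expect the main obstacle to be the bookkeeping around the subdifferential selections rather than any single hard estimate: one must fix admissible values of $\dot{R}$ at every zero coordinate of $\hat{\bz}$ and of $\bz^*$ so that both first-order identities hold with equality, check via (\ref{eq:b-cond}) that these values never exceed $\lambda/b$ in magnitude, and --- crucially --- argue that the coordinates in $\hat{\bS} \cap \bS^*$ contribute \emph{exactly} zero rather than merely a $\theta$-controlled amount, so that only $\bF$ appears in the final bound. The explicit evaluation of $\theta$ for the capped-$\ell^{1}$ regularizer is elementary but needs care at the break points $0$ and $\pm b$.
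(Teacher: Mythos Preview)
Your proposal is correct and follows essentially the same route as the paper's proof: subtract the two capped-$\ell^{1}$ optimality conditions from Lemma~\ref{lemma::equivalence-to-capped-l1}, take the inner product with $\hat{\bz}-\bz^*$, lower bound the quadratic form via $\kappa_0^2$, observe that coordinates in $\hat{\bS}\cap\bS^*$ contribute zero, control the $\bF$-coordinates by the degree of nonconvexity, and finish with Cauchy--Schwarz and the explicit evaluation of $\theta$ for the capped-$\ell^{1}$ penalty. If anything, your treatment of the subdifferential selection at $\hat{\bz}_j=0$ (noting $\theta(0,\kappa)\le\max\{0,\lambda/b-\kappa b\}$ for the particular $\dot{R}(0;b)$ forced by (\ref{eq:b-cond})) is slightly more careful than the paper's, which simply asserts equality.
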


It is worthwhile to connect the assumption on the dictionary  in Theorem~\ref{theorem::suboptimal-optimal} to the Restricted Isometry Property (RIP) \citep{CandesTao05} used frequently in the compressive sensing literature. Before the discussion, the definition of sparse eigenvalues is defined below.
\begin{MyDefinition}
(\textit{Sparse eigenvalues})
The lower and upper sparse eigenvalues of a matrix $\bA$ are defined as
\begin{small}\begin{align*}
& \kappa_-(m) := \min_{\|\bu\|_0 \leq m; \|\bu\|_2=1} \|\bA \bu\|_2^2 \quad \kappa_+(m) := \max_{\|\bu\|_0 \leq m,\|\bu\|_2=1}\|\bA \bu\|_2^2
\end{align*}\end{small}%
\end{MyDefinition}

\subsection*{Assumptions for Our Analysis}
Typical RIP requires bounds such as $\delta_\tau+\delta_{2\tau}+\delta_{3\tau}< 1$ or $\delta_{2\tau} < \sqrt{2}-1$ \citep{Candes2008} for stably recovering the signal from measurements and $\tau$ is the sparsity of the signal, where $\delta_{\tau}=\max\{\kappa_+(\tau)-1,1-\kappa_-(\tau)\}$. It should be emphasized that our bound (\ref{eq:suboptimal-optimal}) only requires nonsingularity of the submatrix of $\bD$ with the columns in the support of $\hat \bz$ and $\bz^*$, which are more general than RIP in the sense of not requiring bounds in terms of $\delta$. To see this point, choose the initialization $\bz^{0}$ such that $|{\rm supp}(\bz^{0})| \le \tau$, then the condition $\delta_{2\tau} < \sqrt{2}-1$ \citep{Candes2008} indicates that $\sigma_{\min}(\bD_{\hat \bS \cup \bS^*}) > 2 - \sqrt{2}$, which is stronger than our assumption that $\sigma_{\min}(\bD_{\hat \bS \cup \bS^*}) > 0$. In addition, our assumption on the nonsingularity of $\bD_{\hat \bS \cup \bS^*}$ is much weaker than that of the sparse eigenvalues used in RIP conditions which require minimum eigenvalue of every submatrix of $\bD$ of specified number of columns. 
\begin{MyRemark}\label{remark::suboptimal-optimal}
If $\bz^{(0)}$ is sparse, $ {\hat \bz}$ is also sparse by the property of support shrinkage in Lemma~\ref{lemma::shrinkage-sufficient-decrease}. We can then expect that $|\hat \bS \cup \bS^*|$ is reasonably small, and a small $|\hat \bS \cup \bS^*|$ often increases the chance of a larger $\sigma_{\min}(\bD_{|\hat \bS \cup \bS^*|})$. Also note that the bound for distance between the sub-optimal solution and the globally optimal solution presented in Theorem~\ref{theorem::suboptimal-optimal} does not require typical RIP conditions. Moreover, when $\frac{\lambda}{b} - {\kappa} |{\hat \bz}_j - b|$ for nonzero ${\hat \bz}_j$ and $\frac{\lambda}{b} - {\kappa} b$ are no greater than $0$, or they are small positive numbers, the sub-optimal solution $ {\hat \bz}$ is equal to or very close to the globally optimal solution.
\end{MyRemark}

\section{Accelerated Proximal Gradient Descent by Randomized Algorithms}
In this section, we propose and analyze two randomized algorithms that accelerate PGD for the $\ell^{0}$ sparse approximation problem by random projection. Our first algorithm employs randomized low rank matrix approximation for the dictionary $\bD$, and the second algorithm uses random projection to generate dimensionality-reduced signal and dictionary and then perform PGD on the low-dimensional signal and dictionary. Our theoretical analysis establishes the suboptimality of the solutions obtained by the proposed randomized algorithms.
\subsection{Algorithm}
While Lemma~\ref{lemma::shrinkage-sufficient-decrease} shows that the $\ell^{0}$ sparse approximation (\ref{eq:l0-sparse-appro}) is equivalent to its dictionary-reduced version (\ref{eq:l0-sparse-appro-reduced}) which leads to improved efficiency, we are still facing the computational challenge incurred by dictionary with large dimension $d$ and size $n$ (or large $|\bS|$). The literature has extensively employed randomized algorithms for accelerating the numeral computation of different kinds of matrix optimization problems including low rank approximation and matrix decomposition \citep{Frieze2004-fast-monto-carlo-lowrank,Drineas2004-large-graph-svd,Sarlos2006-large-matrix-random-projection,
Drineas2006-fast-monto-carlo-lowrank,Drineas2008-matrix-decomposition,Mahoney2009-matrix-decomposition,
Drineas2011-fast-least-square-approximation,Lu2013-fast-ridge-regression-random-subsample}. In order to accelerate the numerical computation involved in PGD, we propose two randomized algorithms. The first algorithm adopts the randomized low rank approximation by random projection \citep{Halko2011-random-matrix-decomposition} to obtain a low rank approximation of the dictionary so as to accelerate the computation of gradient for PGD. The second algorithm generates dimensionality-reduced signal and dictionary by random projection and then apply PGD upon the low-dimensional signal and dictionary for improved efficiency. The optimization algorithm for the $\ell^{0}$ sparse approximation problem (\ref{eq:l0-sparse-appro}) by PGD with low rank approximation of $\bD$ via Randomized Matrix Approximation, termed PGD-RMA in this paper, is described in Section~\ref{sec::PGD-RMA}. Bearing the idea of using randomized algorithm for dimension reduction, the algorithm that accelerates PGD via randomized dimension reduction, termed PGD-RDR, is introduced in Section~\ref{sec::PGD-RDR}.  

\subsubsection{Accelerated Proximal Gradient Descent via Randomized Matrix Approximation: PGD-RMA}
\label{sec::PGD-RMA}
The procedure of PGD-RMA is described as follows. A random matrix $\Omega \in \R^{n \times k}$ is computed such that each element $\Omega_{ij}$ is sampled independently from the Gaussian distribution $\cN(0,1)$. With the QR decomposition of $\bD \Omega$, i.e. $\bD \Omega = \bQ \bR$ where $\bQ \in \R^{d \times k}$ is an orthogonal matrix of rank $k$ and $\bR \in \R^{k \times k}$ is an upper triangle matrix. The columns of $\bQ$ form the orthogonal basis for $\bD \Omega$. Then $\bD$ is approximated by projecting $\bD$ onto the range of $\bD \Omega$: $\bQ\bQ^{\top}\bD = \bQ\bW = \tilde \bD$ where $\bW = \bQ^{\top}\bD \in \R^{k \times n}$. Replacing $\bD$ with its low rank approximation $\tilde \bD$, we resort to solve the following reduced $\ell^0$ sparse approximation problem (\ref{eq:l0-sparse-appro-rma})
\begin{small}\begin{align}\label{eq:l0-sparse-appro-rma}
&\mathop {\min }\limits_{{\bz \in \R^n} } \tilde L(\bz) = {\|\bx - \tilde \bD \bz\|_2^2 + {\lambda}\|{\bz}\|_0}
\end{align}\end{small}%
And the first step of PGD (\ref{eq:l0-sa-proximal-step1}) for the original $\ell^0$ sparse approximation problem is reduced to
\begin{small}\begin{align}\label{eq:l0-sa-proximal-step1-reduced}
&\tilde {\bz}^{(t)} = {\bz}^{(t-1)} - \frac{2}{{\tau}s} ({{\tilde \bD}^\top}{\tilde \bD}{{\bz}^{(t-1)}}-{{\tilde \bD}^\top}{\bx}) \\ \nonumber
& ={\bz}^{(t-1)} - \frac{2}{{\tau}s} ({\bW^{\top}} {\bQ^{\top}} \bQ \bW {{\bz}^{(t-1)}}-{\bW^{\top}}{\bQ^{\top}}{\bx})
\end{align}\end{small}%
The complexity of this step is reduced from $\cO(dn)$ to $\cO(dk + nk)$ wherein $k \ll \min\{d,n\}$ and significant efficiency improvement is achieved. Note that the computational cost of QR decomposition for $\bD \Omega$ is less than $2dk^2$, which is acceptable with a small $k$.

The randomized algorithm PGD-RMA is described in Algorithm~\ref{alg:pgd-rma}. The time complexity of PGD-RMA is $\cO(M(dk + nk))$ where $M$ is the number of iterations (or maximum number of iterations), compared to the complexity $\cO(Mdn)$ for the original PGD.
\begin{algorithm}[!ht]
\renewcommand{\algorithmicrequire}{\textbf{Input:}}
\renewcommand\algorithmicensure {\textbf{Output:} }
\caption{Proximal Gradient Descent via Randomized Matrix Approximation (PGD-RMA) for the $\ell^{0}$ Sparse Approximation (\ref{eq:l0-sparse-appro})  }
\label{alg:pgd-rma}
\begin{algorithmic}[1]
\REQUIRE ~~\\
The given signal $\bx \in R^d$, the dictionary $\bD$, the parameter $\lambda$ for the weight of the $\ell^{0}$-norm, maximum iteration number $M$, stopping threshold $\varepsilon$, the initialization ${\bz}^{(0)} \in \R^n$.
\STATE Sample a random matrix $\Omega \in \R^{n \times k}$ by $\Omega_{ij} \sim \cN(0,1)$.
\STATE Compute the QR decomposition of $\bD \Omega$: $\bD \Omega = \bQ \bR$
\STATE Approximate $\bD$ by $\tilde \bD = \bQ\bW$ where $\bW = \bQ^{\top}\bD$
\STATE{Perform PGD with (\ref{eq:l0-sa-proximal-step1-reduced}) and (\ref{eq:l0-sa-proximal-step2})} starting from $t = 1$. The iteration terminates either $\{{\bz}^{(t)}\}_t$ or $\{L({\bz}^{(t)})\}_t$ converges under certain threshold or maximum iteration number is achieved.
\ENSURE Obtain the sparse code $\tilde \bz$ upon the termination of the iterations.
\end{algorithmic}
\end{algorithm}

\subsubsection{Accelerated Proximal Gradient Descent via Random Dimension Reduction: PGD-RDR}
\label{sec::PGD-RDR}
We introduce PGD-RDR which employs random projection to generate low-dimensional signal and dictionary, upon which PGD is applied for improved efficiency. The literature \citep{Frankl:1987-JL-Lemma,Indyk1998-ANN,Zhang2016-sparse-random-convex-concave} extensively considers the random projection that satisfies the following $\ell^2$-norm preserving property, which is closed related to the proof of the Johnson–Lindenstrauss lemma \citep{Dasgupta2003-JL-proof}.
\begin{MyDefinition}\label{def:L2-norm-preseving}
The linear operator $\bT \colon \R^d \to \R^m$ satisfies the $\ell^2$-norm preserving property if there exists constant $c > 0$ such that
\begin{small}\begin{align}
&\Pr\big[(1-\varepsilon)\|\bv\|_2 \le \|\bT \bv\|_2 \le  (1+\varepsilon)\|\bv\|_2\big] \ge 1-2e^{\frac{m\varepsilon^2}{c}}
\end{align}\end{small}%
holds for any fixed $\bv \in \R^d$ and $0 < \varepsilon \le \frac{1}{2}$.
\end{MyDefinition}
The linear operator $\bT$ satisfying the $\ell^2$-norm preserving property can be generated randomly according to uncomplicated distributions. With $\bT^{'} = \sqrt{m} \bT$, it is proved in \citep{Arriaga2006,Achlioptas2003-database-friendly-random-projection} that $\bT$ satisfies the $\ell^2$-norm preserving property, if all the elements of $\bT^{'}$ are sampled independently from the Gaussian distribution $\cN(0,1)$, or uniform distribution over ${\pm 1}$, or the database-friendly distribution described by
\begin{small}\begin{align*}
\bT_{ij}^{'} =
\left\{
\begin{array}
    {r@{\quad:\quad}l}
    \sqrt{3} & {{\rm with probability} \,\, \frac{1}{6}} \\
    \sqrt{0} & {{\rm with probability} \,\, \frac{2}{3}} \\
    -\sqrt{3} & {{\rm with probability} \,\, \frac{1}{6}}
\end{array}
\right., 1 \le i \le m, 1 \le j \le d
\end{align*}\end{small}%
With $m < d$, PGD-RDR first generate the dimensionality-reduced signal and dictionary by $\bar \bx =  \bT \bx$ and $\bar \bD =  \bT \bD$, then
solve the following dimensionality-reduced $\ell^0$ sparse approximation problem
\begin{small}\begin{align}\label{eq:l0-sparse-appro-rdr}
&\mathop {\min }\limits_{{\bz \in \R^n} } \bar L(\bz) = {\|\bar \bx - \bar \bD \bz\|_2^2 + {\lambda}\|{\bz}\|_0}
\end{align}\end{small}%
by PGD. The procedure of PGD-RDR for the $\ell^{0}$ sparse approximation problem (\ref{eq:l0-sparse-appro}) is described in Algorithm~\ref{alg:pgd-rdr}. The time complexity of sampling the random matrix $\bT$ is $\cO(md)$, and the time complexity of the first step of PGD (\ref{eq:l0-sa-proximal-step1}) for gradient descent is reduced from $\cO(dn)$ to $\cO(mn)$. The time complexity of PGD-RDR is $\cO(Mmn)$ where $M$ is the number of iterations (or maximum number of iterations), compared to the complexity $\cO(Mdn)$ for the original PGD. Improvement on the efficiency is achieved with $m < d$.
\begin{algorithm}[!ht]
\renewcommand{\algorithmicrequire}{\textbf{Input:}}
\renewcommand\algorithmicensure {\textbf{Output:} }
\caption{Proximal Gradient Descent via Randomized Dimension Reduction (PGD-RDR) for the $\ell^{0}$ Sparse Approximation (\ref{eq:l0-sparse-appro})  }
\label{alg:pgd-rdr}
\begin{algorithmic}[1]
\REQUIRE ~~\\
The given signal $\bx \in R^d$, the dictionary $\bD$, the parameter $\lambda$ for the weight of the $\ell^{0}$-norm, maximum iteration number $M$, stopping threshold $\varepsilon$, the initialization ${\bz}^{(0)} \in \R^n$.
\STATE Sample a random matrix $\bT \in \R^{m \times n}$ which satisfies the $\ell^2$-norm preserving property in Definition~\ref{def:L2-norm-preseving}, e.g. $\sqrt{m} \bT_{ij} \sim \cN(0,1)$.
\STATE Compute the low-dimensional signal $\bar \bx =  \bT \bx$ and the dimensionality-reduced dictionary $\bar \bD =  \bT \bD$.

\STATE{Perform PGD with (\ref{eq:l0-sa-proximal-step1-reduced}) and (\ref{eq:l0-sa-proximal-step2})} starting from $t = 1$, with $\bx$ and $\bD$ replaced by $\bar \bx$ and $\bar \bD$. The iteration terminates either $\{{\bz}^{(t)}\}_t$ or $\{L({\bz}^{(t)})\}_t$ converges under certain threshold or maximum iteration number is achieved.
\ENSURE Obtain the sparse code $\bar \bz$ upon the termination of the iterations.
\end{algorithmic}
\end{algorithm}

\subsection{Theoretical Analysis}
We analyze the theoretical properties of the proposed PGD-RMA and PGD-RDR in the previous section. For both randomized algorithms, we present the bounded gap between the sub-optimal solution to the reduced $\ell^0$ sparse approximation problem (\ref{eq:l0-sparse-appro-rma}) or (\ref{eq:l0-sparse-appro-rdr}) and the globally optimal solution $\bz^*$ to the original problem.

\subsection{Analysis for PGD-RMA}
\citep{Halko2011-random-matrix-decomposition} proved that the approximation $\tilde \bD$ is close to $\bD$ in terms of the spectral norm:
\begin{MyLemma}\label{lemma::D-approx}
(\textit{Corollary $10.9$ by \citep{Halko2011-random-matrix-decomposition} })
Let $k_0 \ge 2$ and $p = k-k_0 \ge 4$, then probability at least $1-6e^{-p}$, then the spectral norm of $\bD - \hat \bD$ is bounded by
\begin{align}\label{eq:D-appro}
&\|\bD - \hat \bD\|_2 \le C_{k,k_0}
\end{align}
where
\begin{align}\label{eq:C-k-k0}
&C_{k,k_0} = \big(1+17\sqrt{1+\frac{k_0}{p}}\big) \sigma_{k_0+1} + \frac{8\sqrt{k}}{p+1} (\sum\limits_{j > k_0} \sigma_j^2)^{\frac{1}{2}}
\end{align}
$\sigma_1 \ge \sigma_2 \ge \ldots$ are the singular values of $\bD$.
\end{MyLemma}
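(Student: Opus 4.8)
This is Corollary~10.9 of \citep{Halko2011-random-matrix-decomposition}, applied with the target matrix taken to be $\bD$ and the test matrix the standard Gaussian $\Omega \in \R^{n\times k}$ of Algorithm~\ref{alg:pgd-rma}; since $\bQ$ has orthonormal columns spanning $\mathrm{range}(\bD\Omega)$, the approximation $\hat\bD = \bQ\bQ^\top\bD$ is exactly the rank-$k$ randomized range sketch analyzed there, and the plan is simply to verify that the hypotheses $k_0 \ge 2$ and $p = k - k_0 \ge 4$ match and then invoke the corollary verbatim. For completeness I would also recall the architecture of that argument, as it explains where the constant $C_{k,k_0}$ comes from.

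The first step is to rewrite the error as the projection error of the randomized range finder, $\|\bD - \hat\bD\|_2 = \|(\bI - \bQ\bQ^\top)\bD\|_2$. Writing the SVD $\bD = \mathbf{U}\Sigma\mathbf{V}^\top$ and partitioning $\mathbf{V} = [\mathbf{V}_1\ \mathbf{V}_2]$ with $\mathbf{V}_1$ holding the top $k_0$ right singular vectors, and $\Sigma$ into the corresponding leading block $\Sigma_1$ and trailing block $\Sigma_2$ (so that $\|\Sigma_2\|_2 = \sigma_{k_0+1}$), set $\Omega_1 = \mathbf{V}_1^\top\Omega$ and $\Omega_2 = \mathbf{V}_2^\top\Omega$. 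By rotational invariance of the Gaussian distribution, $\Omega_1$ (of size $k_0 \times k$) and $\Omega_2$ are independent matrices with i.i.d.\ $\cN(0,1)$ entries, and since $k > k_0$ the matrix $\Omega_1$ has full row rank almost surely. The deterministic error bound of \citep{Halko2011-random-matrix-decomposition} (their Theorem~9.1) then yields
\begin{align*}
\|(\bI - \bQ\bQ^\top)\bD\|_2^2 \;\le\; \|\Sigma_2\|_2^2 + \|\Sigma_2\,\Omega_2\,\Omega_1^{\dagger}\|_2^2 ,
\end{align*}
which confines all the randomness to the single term $\|\Sigma_2\Omega_2\Omega_1^{\dagger}\|_2$.

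It remains to control that term. I would first bound its expectation: conditioning on $\Omega_1$ and peeling off $\Omega_2$ with the standard Gaussian-matrix estimates $\mathbb{E}\|\bG\|_2 \le \sqrt{a}+\sqrt{b}$ and $\mathbb{E}\|\bG\|_F^2 = ab$ for an $a \times b$ standard Gaussian $\bG$, together with the pseudoinverse estimates $\mathbb{E}\|\Omega_1^{\dagger}\|_F^2 = k_0/(p-1)$ and the tail bound $\Pr\{\|\Omega_1^{\dagger}\|_2 \ge \tfrac{e\sqrt{k}}{p+1}t\} \le t^{-(p+1)}$; this reproduces the expectation bound behind Theorem~10.6 of \citep{Halko2011-random-matrix-decomposition}. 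Then Gaussian concentration of measure, applied conditionally on $\Omega_1$ to the Lipschitz map $\Omega_2 \mapsto \|\Sigma_2\Omega_2\Omega_1^{\dagger}\|_2$ (with Lipschitz constant $\|\Sigma_2\|_2\,\|\Omega_1^{\dagger}\|_2$), combined with the same tail bound for $\|\Omega_1^{\dagger}\|_2$, upgrades the expectation bound to a high-probability statement; summing the failure probabilities of the two events gives the factor $6e^{-p}$, and simplifying the numerical constants gives $C_{k,k_0}$ exactly as in (\ref{eq:C-k-k0}).

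The main technical obstacle is the control of $\|\Omega_1^{\dagger}\|_2$, the reciprocal of the smallest singular value of a short, wide Gaussian matrix: this random variable has only a polynomially light lower tail, so crude moment or union bounds are too lossy, and it is precisely the oversampling hypothesis $p = k - k_0 \ge 4$ that guarantees enough finite moments (even $\mathbb{E}\|\Omega_1^{\dagger}\|_F^2$ already needs $p \ge 2$) and a workable tail. Everything else — the deterministic projection inequality, the rotational-invariance split, and the Gaussian Lipschitz concentration — is comparatively routine, so in the paper I would present the result simply as a citation to \citep{Halko2011-random-matrix-decomposition} and proceed to apply it in the analysis of PGD-RMA.
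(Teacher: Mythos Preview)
Your proposal is correct and matches the paper's treatment: the paper does not give its own proof of this lemma but simply states it as Corollary~10.9 of \citep{Halko2011-random-matrix-decomposition}, so your plan to invoke that corollary verbatim (after checking the hypotheses) is exactly what is done, and your additional sketch of the underlying argument goes beyond what the paper provides.
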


Let $\tilde \bz$ be the globally optimal solution to (\ref{eq:l0-sparse-appro-rma}), $\tilde \bS = {\rm supp}(\tilde {\bz})$, $\tilde Q(\bz) = \|\bx - \tilde \bD {\bz}\|_2^2$, ${\bz}^{(0)}$ be the initialization for PGD for the optimization of the reduced problem (\ref{eq:l0-sparse-appro-rma}). We have the following theorem showing the upper bound for the gap between $\tilde \bz$ and ${\bz}^*$.
\begin{MyTheorem}\label{theorem::optimal-rma}
(\textit{Optimal solution to the reduced problem (\ref{eq:l0-sparse-appro-rma}) is close to the that to the original problem})
Let $\bG = \tilde \bS \cup \bS^*$. Suppose $\bD_{\bG}$ is not singular with $\tau_0 \triangleq \sigma_{\min}(\bD_{\bG}) > 0$, $2\tau_0^2 > \tau > 0$. Then with probability at least $1-6e^{-p}$,
\begin{small}\begin{align}\label{eq:optimal-rp}
&\|{\bz}^*-{\tilde \bz}\|_2 \nonumber \\
&\le \frac{1}{2\tau_0^2-\tau}\bigg(\big(\sum\limits_{j \in {\bG} \cap \tilde \bS} (\max\{0,\frac{\lambda}{b} - {\kappa} |{\tilde \bz}_j - b| \})^2 + \nonumber \\
&\sum\limits_{j \in {\bG} \setminus \tilde \bS} (\max\{0, \frac{\lambda}{b} - {\kappa} b\})^2 \big)^{\frac{1}{2}} + 2C_{k,k_0}M_0 (2\sigma_{\max}(\bD) + C_{k,k_0}) + 2C_{k,k_0}\|\bx\|_2 \bigg)
\end{align}\end{small}%
where $M_0 = \frac{\|\bx\|_2 + \sqrt{{\tilde L}({\bz}^{(0)})}} {\tau_0}$, and $b$ satisfies
\begin{small}\begin{align}\label{eq:b-cond-rp}
&0< b < \min\{\min_{j \in {\tilde \bS}} | {\tilde \bz}_j|, \max_{k \notin {\tilde \bS}} \frac{\lambda }{ (\frac{\partial {\tilde Q}}{\partial {\bz_k}}|_{\bz =  {\tilde \bz}}-\lambda)_{+}},
\min_{j \in {\bS^*}} | {\bz_j}^*|, \max_{k \notin {\bS^*}} \frac{\lambda}{(\frac{\partial Q}{\partial {\bz_k}}|_{\bz = {\bz}^*}-\lambda)_{+}}  \}
\end{align}\end{small}
\end{MyTheorem}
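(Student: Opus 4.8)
The argument mirrors the template behind Theorem~\ref{theorem::suboptimal-optimal}; the genuinely new ingredient is a perturbation analysis that absorbs the discrepancy between the reduced loss $\tilde{Q}(\bz)=\|\bx-\tilde{\bD}\bz\|_2^2$ and the original loss $Q(\bz)=\|\bx-\bD\bz\|_2^2$ into the spectral error $\|\bD-\tilde{\bD}\|_2$. First I would invoke Lemma~\ref{lemma::D-approx}: on an event of probability at least $1-6e^{-p}$ one has $\|\bD-\tilde{\bD}\|_2\le C_{k,k_0}$, and the entire argument below is carried out on that event, so the final bound inherits the probability. Next, by the reasoning of Lemma~\ref{lemma::equivalence-to-capped-l1} --- whose proof applies verbatim with $Q$ replaced by $\tilde{Q}$ in the case of the reduced problem --- for any $b$ in the range (\ref{eq:b-cond-rp}) both $\bz^*$, the global minimizer of (\ref{eq:l0-sparse-appro}), and $\tilde{\bz}$, the global minimizer of (\ref{eq:l0-sparse-appro-rma}), are local solutions in the sense of (\ref{eq:cond-local-solution}) of the capped-$\ell^1$ relaxations of their respective $\ell^0$ problems: $2\bD^\top(\bD\bz^*-\bx)+\dot{\bR}(\bz^*;b)=\bzero$ and $2\tilde{\bD}^\top(\tilde{\bD}\tilde{\bz}-\bx)+\dot{\bR}(\tilde{\bz};b)=\bzero$ (the $\tilde{Q}$-terms and the $Q$-terms of (\ref{eq:b-cond-rp}) are exactly what these two requirements impose).

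Now restrict both identities to the index block $\bG=\tilde{\bS}\cup\bS^*$, which contains the supports of both vectors; then $\bD\bz^*=\bD_{\bG}\bz^*_{\bG}$, $\tilde{\bD}\tilde{\bz}=\tilde{\bD}_{\bG}\tilde{\bz}_{\bG}$, and $\bu:=\bz^*_{\bG}-\tilde{\bz}_{\bG}$ satisfies $\|\bu\|_2=\|\bz^*-\tilde{\bz}\|_2$. Subtract the restricted identities, split $\tilde{\bD}_{\bG}^\top\tilde{\bD}_{\bG}\tilde{\bz}_{\bG}=\bD_{\bG}^\top\bD_{\bG}\tilde{\bz}_{\bG}+(\tilde{\bD}_{\bG}^\top\tilde{\bD}_{\bG}-\bD_{\bG}^\top\bD_{\bG})\tilde{\bz}_{\bG}$ so that $2\bD_{\bG}^\top\bD_{\bG}\bu$ is the leading term, and take the inner product with $\bu$. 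The left side becomes $2\|\bD_{\bG}\bu\|_2^2\ge 2\tau_0^2\|\bu\|_2^2$ since $\sigma_{\min}(\bD_{\bG})=\tau_0$. On the right I would bound three pieces: (i) the regularizer difference $\langle\dot{\bR}(\tilde{\bz}_{\bG};b)-\dot{\bR}(\bz^*_{\bG};b),\bu\rangle$, which by the coordinatewise degree-of-nonconvexity estimate of Definition~\ref{def::degree-nonconvexity} (the same evaluation used for Theorem~\ref{theorem::suboptimal-optimal}, namely $\theta(\tilde{\bz}_j,\kappa)=\max\{0,\tfrac{\lambda}{b}-\kappa|\tilde{\bz}_j-b|\}$ for $j\in\tilde{\bS}$ and $\theta(\tilde{\bz}_j,\kappa)=\max\{0,\tfrac{\lambda}{b}-\kappa b\}$ for $\tilde{\bz}_j=0$, valid when $b$ lies in the range (\ref{eq:b-cond-rp})) is at most $\kappa\|\bu\|_2^2+\|\bu\|_2\big(\sum_{j\in\bG\cap\tilde{\bS}}(\max\{0,\tfrac{\lambda}{b}-\kappa|\tilde{\bz}_j-b|\})^2+\sum_{j\in\bG\setminus\tilde{\bS}}(\max\{0,\tfrac{\lambda}{b}-\kappa b\})^2\big)^{1/2}$; (ii) the operator-perturbation term $2\langle(\tilde{\bD}_{\bG}^\top\tilde{\bD}_{\bG}-\bD_{\bG}^\top\bD_{\bG})\tilde{\bz}_{\bG},\bu\rangle$, which, using $\|\tilde{\bD}_{\bG}^\top\tilde{\bD}_{\bG}-\bD_{\bG}^\top\bD_{\bG}\|_2\le\|\tilde{\bD}^\top\tilde{\bD}-\bD^\top\bD\|_2\le 2\|\tilde{\bD}\|_2 C_{k,k_0}+C_{k,k_0}^2\le C_{k,k_0}(2\sigma_{\max}(\bD)+C_{k,k_0})$ (writing $\bD=\tilde{\bD}+(\bD-\tilde{\bD})$ and $\|\tilde{\bD}\|_2=\|\bQ\bQ^\top\bD\|_2\le\sigma_{\max}(\bD)$), is at most $2C_{k,k_0}(2\sigma_{\max}(\bD)+C_{k,k_0})\|\tilde{\bz}\|_2\|\bu\|_2$; and (iii) the data-perturbation term $2\langle(\bD_{\bG}-\tilde{\bD}_{\bG})^\top\bx,\bu\rangle$, at most $2C_{k,k_0}\|\bx\|_2\|\bu\|_2$.

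It remains to supply the a priori bound $\|\tilde{\bz}\|_2\le M_0$: since $\tilde{\bz}$ minimizes $\tilde{L}$ we have $\|\bx-\tilde{\bD}\tilde{\bz}\|_2^2\le\tilde{L}(\tilde{\bz})\le\tilde{L}(\bz^{(0)})$, hence $\|\tilde{\bD}\tilde{\bz}\|_2\le\|\bx\|_2+\sqrt{\tilde{L}(\bz^{(0)})}$; combining this with the orthogonal splitting $\bD\tilde{\bz}=\tilde{\bD}\tilde{\bz}+(\bI-\bQ\bQ^\top)\bD\tilde{\bz}$ and with $\|\bD_{\bG}\tilde{\bz}_{\bG}\|_2\ge\tau_0\|\tilde{\bz}\|_2$ yields $\|\tilde{\bz}\|_2\le M_0=(\|\bx\|_2+\sqrt{\tilde{L}(\bz^{(0)})})/\tau_0$ (in the regime where $C_{k,k_0}$ is small relative to $\tau_0$; see below). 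Substituting $\|\tilde{\bz}\|_2\le M_0$ into (ii), collecting (i)--(iii), dividing through by $\|\bu\|_2$, and moving the $\kappa\|\bu\|_2^2$ contribution of (i) to the left side so that the coefficient of $\|\bu\|_2$ becomes $2\tau_0^2-\tau$ (legitimate because $2\tau_0^2>\tau>0$, with $\tau$ playing the role of the degree-of-nonconvexity parameter) produces exactly the estimate (\ref{eq:optimal-rp}), since $\|\bu\|_2=\|\bz^*-\tilde{\bz}\|_2$.

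The main obstacle is the perturbation bookkeeping in (ii)--(iii) and, above all, making the a priori control of $\|\tilde{\bz}\|_2$ fully rigorous: because $\bD\neq\tilde{\bD}$, the naive chain $\|\tilde{\bz}\|_2\le\tau_0^{-1}\|\bD\tilde{\bz}\|_2\le\tau_0^{-1}(\|\bx\|_2+\|\bx-\bD\tilde{\bz}\|_2)$ re-introduces $\|\tilde{\bz}\|_2$ on the right through $\|(\bD-\tilde{\bD})\tilde{\bz}\|_2\le C_{k,k_0}\|\tilde{\bz}\|_2$, so the estimate must instead be closed by exploiting the orthogonality of $\tilde{\bD}\tilde{\bz}$ and $(\bI-\bQ\bQ^\top)\bD\tilde{\bz}$ (which strictly speaking replaces $\tau_0^{-1}$ by $(\tau_0^2-C_{k,k_0}^2)^{-1/2}$) or by otherwise absorbing the $C_{k,k_0}$ correction into $M_0$. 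One should also double-check that restricting $\tilde{\bD}^\top\tilde{\bD}-\bD^\top\bD$ and $\bD-\tilde{\bD}$ to the block $\bG$ does not increase the relevant norms, and that the proof of Lemma~\ref{lemma::equivalence-to-capped-l1} really does transfer with $\tilde{Q}$ in place of $Q$. Everything else --- the evaluation of $\theta$ for the capped-$\ell^1$ piece and the closing algebra --- is a routine adaptation of the proof of Theorem~\ref{theorem::suboptimal-optimal}.
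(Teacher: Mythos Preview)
Your proposal is correct and tracks the paper's own proof closely: both arguments show $\tilde{\bz}$ and $\bz^*$ are local solutions of their respective capped-$\ell^1$ problems, control the residual $\|2\bD^\top\bD\tilde{\bz}-2\bD^\top\bx+\dot{\bR}(\tilde{\bz};b)\|_2$ via the spectral bound $\|\bD-\tilde{\bD}\|_2\le C_{k,k_0}$ together with $\|\tilde{\bz}\|_2\le M_0$, and then feed this into the Theorem~\ref{theorem::suboptimal-optimal} machinery (the paper works with the full gradient identity rather than restricting to $\bG$ first, but that is purely organizational). Your flagging of the a~priori bound $\|\tilde{\bz}\|_2\le M_0$ as the delicate step is in fact more careful than the paper itself, which simply asserts it from $\tilde{L}(\tilde{\bz})\le\tilde{L}(\bz^{(0)})$ without addressing the mismatch between $\tau_0=\sigma_{\min}(\bD_{\bG})$ and $\sigma_{\min}(\tilde{\bD}_{\bG})$.
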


Let $\bS = {\rm supp}({\bz}^{(0)})$. According to Theorem~\ref{theorem::suboptimal-optimal} and Theorem~\ref{theorem::optimal-rma}, we have the bounded gap between the sub-optimal solution to the reduced problem (\ref{eq:l0-sparse-appro-rma}) and the globally optimal solution ${\bz}^*$ to the original problem (\ref{eq:l0-sparse-appro}).
\begin{MyTheorem}\label{theorem::suboptimal-optimal-rp}
(\textit{Sub-optimal solution to the reduced problem (\ref{eq:l0-sparse-appro-rma}) is close to the globally optimal solution to the original problem})
Choose ${\bz}^{(0)}$ such that $\|\bx - {\tilde \bD} {\bz}^{(0)}\|_2 \le 1$, and $s > \max\{2|\bS|, \frac{2(1+{\lambda}|\bS|)}{\lambda \tau}\}$, if ${\tilde \bD}_{\bS}$ is nonsingular, then the sequence $\{{\bz}^{(t)}\}_t$ generated by PDG for the reduced problem (\ref{eq:l0-sparse-appro-rma}) converges to a critical point of ${\tilde L}(\bz)$, denoted by $\hat {\tilde \bz}$. Let $\hat \bS = {\rm supp}(\hat {\tilde \bz})$, $\bF = ({\hat \bS} \setminus \tilde \bS) \cup (\tilde \bS \setminus {\hat \bS})$, $\bG = \tilde \bS \cup \bS^*$. Suppose ${\tilde \bD}_{\hat \bS \cup \tilde \bS}$ is not singular with $\kappa_0 \triangleq \sigma_{\min}({\tilde \bD}_{\hat \bS \cup \tilde \bS}) > 0$, and $\kappa_0^2 > \kappa > 0$; $\bD_{\bG}$ is not singular with $\tau_0 \triangleq \sigma_{\min}(\bD_{\bG}) > 0$, $2\tau_0^2 > \tau > 0$. Then with probability at least $1-6e^{-p}$,
\begin{small}\begin{align}\label{eq:optimal-rp}
&\|{\bz}^*-{\hat {\tilde \bz}}\|_2 \le b_1 + b_2,
\end{align}\end{small}%
where
\begin{small}\begin{align*}
&b_1 = \frac{\big(\sum\limits_{j \in \bF \cap \hat \bS} (\max\{0,\frac{\lambda}{b} - {\kappa} |{\hat {\tilde \bz}}_j - b|\})^2 +
\sum\limits_{j \in \bF \setminus \hat \bS} (\max\{0,\frac{\lambda}{b} - {\kappa} b\})^2 \big)^{\frac{1}{2}}}{2\kappa_0^2-\kappa} \\
&b_2 = \frac{1}{2\tau_0^2-\tau}\bigg(\big(\sum\limits_{j \in {\bG} \cap \tilde \bS} (\max\{0,\frac{\lambda}{b} - {\kappa} |{\tilde \bz}_j - b| \})^2 +\sum\limits_{j \in {\bG} \setminus \tilde \bS} (\max\{0, \frac{\lambda}{b} - {\kappa} b\})^2 \big)^{\frac{1}{2}} \nonumber \\
&+ 2C_{k,k_0}M_0 (2\sigma_{\max}(\bD) + C_{k,k_0}) + 2C_{k,k_0}\|\bx\|_2 \bigg)
\end{align*}
\end{small}%
$M_0 = \frac{\|\bx\|_2 + \sqrt{{\tilde L}({\bz}^{(0)})}} {\tau_0}$, and $b$ satisfies
\begin{small}\begin{align}\label{eq:b-cond-rp}
&0< b < \min\{\min_{j \in {\tilde \bS}} | {\tilde \bz}_j|, \max_{k \notin {\tilde \bS}} \frac{\lambda }{ (\frac{\partial {\tilde Q}}{\partial {\bz_k}}|_{\bz =  {\tilde \bz}}-\lambda)_{+}},
\min_{j \in {\bS^*}} | {\bz_j}^*|, \max_{k \notin {\bS^*}} \frac{\lambda}{(\frac{\partial Q}{\partial {\bz_k}}|_{\bz = {\bz}^*}-\lambda)_{+}}, \min_{j \in {\hat \bS}} | {\hat {\tilde \bz}}_j|,
\max_{k \notin {\hat \bS}} \frac{\lambda }{ (\frac{\partial {\tilde Q}}{\partial {\bz_k}}|_{\bz =  {\hat {\tilde \bz}}}-\lambda)_{+}} \}
\end{align}\end{small}%
\end{MyTheorem}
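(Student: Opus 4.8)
The plan is to derive the bound by composing the two gap estimates already available, inserting the global optimum $\tilde\bz$ of the reduced problem (\ref{eq:l0-sparse-appro-rma}) as an intermediate point and applying the triangle inequality
\[
\|\bz^* - \hat{\tilde\bz}\|_2 \;\le\; \|\bz^* - \tilde\bz\|_2 \;+\; \|\tilde\bz - \hat{\tilde\bz}\|_2 .
\]
The first summand is bounded by Theorem~\ref{theorem::optimal-rma} directly (this yields $b_2$), and the second is bounded by re-reading the chain Lemma~\ref{lemma::shrinkage-sufficient-decrease}--Lemma~\ref{lemma::equivalence-to-capped-l1}--Theorem~\ref{theorem::suboptimal-optimal} with the design matrix $\bD$ replaced by the low-rank surrogate $\tilde\bD = \bQ\bW$ everywhere (this yields $b_1$).

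First I would record that the iterations (\ref{eq:l0-sa-proximal-step1-reduced}) and (\ref{eq:l0-sa-proximal-step2}) are exactly the PGD iterations for the objective $\tilde L$, which has the same structure --- squared loss plus $\lambda\|\cdot\|_0$ --- as $L$. Hence Lemma~\ref{lemma::shrinkage-sufficient-decrease} and Lemma~\ref{lemma::PGD-convergence} transfer verbatim with $\bD \mapsto \tilde\bD$: under the stated $\|\bx - \tilde\bD\bz^{(0)}\|_2 \le 1$, the stated lower bound on $s$, and nonsingularity of $\tilde\bD_{\bS}$, the sequence $\{\bz^{(t)}\}_t$ converges to a critical point $\hat{\tilde\bz}$ of $\tilde L$ and its support is contained in $\bS$. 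Next I would invoke Theorem~\ref{theorem::suboptimal-optimal} for the problem (\ref{eq:l0-sparse-appro-rma}); its proof uses no property of the design matrix beyond the nonsingularity of the submatrix indexed by the union of the supports of the critical point and the global optimum of the problem being analyzed, so with $\bD_{\hat\bS\cup\bS^*}$ replaced by $\tilde\bD_{\hat\bS\cup\tilde\bS}$, $\kappa_0 = \sigma_{\min}(\tilde\bD_{\hat\bS\cup\tilde\bS})$, and $\kappa_0^2 > \kappa > 0$, one gets $\|\tilde\bz - \hat{\tilde\bz}\|_2 \le b_1$ with $b_1$ as displayed. Finally, Theorem~\ref{theorem::optimal-rma} applied unchanged gives $\|\bz^* - \tilde\bz\|_2 \le b_2$ on the event of probability at least $1 - 6e^{-p}$ provided by Lemma~\ref{lemma::D-approx}; since $b_1$ is deterministic once the random matrix $\tilde\bD$ is fixed, no additional randomness is consumed, and summing the two bounds yields (\ref{eq:optimal-rp}).

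The one place that needs care is the choice of $b$: Lemma~\ref{lemma::equivalence-to-capped-l1} must hold for the reduced problem simultaneously at $\tilde\bz$ and $\hat{\tilde\bz}$ (needed for $b_1$), and the variant used inside Theorem~\ref{theorem::optimal-rma} must hold at $\tilde\bz$ and $\bz^*$ (needed for $b_2$). This forces $b$ below all six coordinate/gradient thresholds, which is precisely what the displayed condition (\ref{eq:b-cond-rp}) records; such a positive $b$ exists whenever the on-support coordinates of $\tilde\bz,\bz^*,\hat{\tilde\bz}$ are nonzero and the off-support gradient quantities are finite. I do not anticipate a new analytic obstacle here --- the entire content is bookkeeping: checking that the hypotheses of the two reused theorems can be met at once by a single $\kappa$ and a single $b$, and confirming that the derivation of Theorem~\ref{theorem::suboptimal-optimal} is genuinely insensitive to whether the dictionary is $\bD$ or its low-rank approximation $\tilde\bD$.
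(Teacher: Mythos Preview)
Your proposal is correct and matches the paper's approach exactly: the paper states just before the theorem that it follows ``according to Theorem~\ref{theorem::suboptimal-optimal} and Theorem~\ref{theorem::optimal-rma}'', i.e., by the triangle inequality through the intermediate point $\tilde\bz$, with Theorem~\ref{theorem::suboptimal-optimal} re-applied to the reduced problem (design matrix $\tilde\bD$) to produce $b_1$ and Theorem~\ref{theorem::optimal-rma} producing $b_2$ on the high-probability event. Your additional bookkeeping on the simultaneous choice of $b$ and the observation that $b_1$ is deterministic conditional on $\tilde\bD$ are exactly the details the paper leaves implicit.
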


\subsection{Analysis for PGD-RDR}

Let $\bar \bz$ be the globally optimal solution to (\ref{eq:l0-sparse-appro-rdr}), $\bar \bS = {\rm supp}(\bar {\bz})$, $\bar Q(\bz) = \|\bar \bx - \bar \bD {\bz}\|_2^2$, ${\bz}^{(0)}$ be the initialization for PGD for the optimization of the reduced problem (\ref{eq:l0-sparse-appro-rdr}). We have the following theorem showing the upper bound for the gap between $\bar \bz$ and ${\bz}^*$.
\begin{MyTheorem}\label{theorem::optimal-rdr}
(\textit{Optimal solution to the dimensionality-reduced problem (\ref{eq:l0-sparse-appro-rdr}) is close to the that to the original problem})
Let $\bH = \bar \bS \cup \bS^*$. Suppose $\bD_{\bH}$ is not singular with $\eta_0 \triangleq \sigma_{\min}(\bD_{\bG}) > 0$, $2\eta_0^2 > \eta > 0$. If $\bT$ satisfies the $\ell^2$-norm preserving property in Definition~\ref{def:L2-norm-preseving}, $m \ge 4c\log{\frac{4}{\delta}}$, then with probability at least $1 - \delta$,
\begin{small}\begin{align}\label{eq:optimal-rp}
&\|{\bz}^*-{\bar \bz}\|_2 \nonumber \\
&\le \frac{1}{2\eta_0^2-\eta}\bigg(\big(\sum\limits_{j \in {\bH} \cap \bar \bS} (\max\{0,\frac{\lambda}{b} - {\kappa} |{\bar \bz}_j - b| \})^2 + \nonumber \\
&\sum\limits_{j \in {\bH} \setminus \bar \bS} (\max\{0, \frac{\lambda}{b} - {\kappa} b\})^2 \big)^{\frac{1}{2}} + 2\|\bD\|_F{M_1}\sqrt{\frac{c}{m}\log{\frac{4}{\delta}}} (\sigma_{\max}(\bD)+1) \bigg)
\end{align}\end{small}%
where $M_1 = \frac{\|\bx\|_2 + \sqrt{{\bar L}({\bz}^{(0)})}} {\eta_0}$, and $b$ satisfies
\begin{small}\begin{align}\label{eq:b-cond-rp}
&0< b < \min\{\min_{j \in {\bar \bS}} | {\bar \bz}_j|, \max_{k \notin {\bar \bS}} \frac{\lambda }{ (\frac{\partial {\bar Q}}{\partial {\bz_k}}|_{\bz =  {\bar \bz}}-\lambda)_{+}},
\min_{j \in {\bS^*}} | {\bz_j}^*|, \max_{k \notin {\bS^*}} \frac{\lambda}{(\frac{\partial Q}{\partial {\bz_k}}|_{\bz = {\bz}^*}-\lambda)_{+}}  \}
\end{align}\end{small}
\end{MyTheorem}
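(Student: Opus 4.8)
\emph{Proof plan.} The argument follows the template of Theorem~\ref{theorem::optimal-rma}, with the random-projection error $\bT^{\top}\bT-\bI$ in place of the low-rank approximation error $C_{k,k_0}$. First, arguing as in Lemma~\ref{lemma::equivalence-to-capped-l1} and using that $\bz^*$ and $\bar\bz$ are globally optimal for the $\ell^{0}$ problems (\ref{eq:l0-sparse-appro}) and (\ref{eq:l0-sparse-appro-rdr}) --- so that on their supports they are the corresponding least-squares fits, while off their supports the $\ell^{0}$ optimality inequalities hold --- I would show, for $b$ chosen as in (\ref{eq:b-cond-rp}), that $\bz^*$ is a local solution of the capped-$\ell^{1}$ problem with loss $\|\bx-\bD\bbeta\|_2^2$ and $\bar\bz$ is a local solution of the capped-$\ell^{1}$ problem with loss $\|\bar\bx-\bar\bD\bbeta\|_2^2$, i.e. $2\bD^{\top}(\bD\bz^*-\bx)+\dot\bR(\bz^*;b)=\bzero$ and $2\bar\bD^{\top}(\bar\bD\bar\bz-\bar\bx)+\dot\bR(\bar\bz;b)=\bzero$. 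Using $\bar\bD^{\top}\bar\bD=\bD^{\top}\bT^{\top}\bT\bD$ and $\bar\bD^{\top}\bar\bx=\bD^{\top}\bT^{\top}\bT\bx$, subtracting the two identities, adding and subtracting $\bD^{\top}\bD\bar\bz$ and $\bD^{\top}\bx$, and restricting to $\bH=\bar\bS\cup\bS^*$ (which contains ${\rm supp}(\bz^*-\bar\bz)$) yields
\[ 2\bD_{\bH}^{\top}\bD_{\bH}(\bz^*-\bar\bz)_{\bH}=2\bD_{\bH}^{\top}(\bT^{\top}\bT-\bI)(\bD\bar\bz-\bx)-\big(\dot\bR(\bz^*;b)-\dot\bR(\bar\bz;b)\big)_{\bH}. \]

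Next I would take the inner product of this identity with $(\bz^*-\bar\bz)_{\bH}$. The left-hand side is at least $2\eta_0^{2}\|\bz^*-\bar\bz\|_2^{2}$ since $\eta_0=\sigma_{\min}(\bD_{\bH})$. For the regularizer term I would reuse the degree-of-nonconvexity estimate from the proof of Theorem~\ref{theorem::suboptimal-optimal}: coordinatewise, $-(\bz^*_j-\bar\bz_j)\big(\dot R(\bz^*_j;b)-\dot R(\bar\bz_j;b)\big)\le\theta(\bar\bz_j,\kappa)\,|\bz^*_j-\bar\bz_j|+\kappa(\bz^*_j-\bar\bz_j)^2$, where --- using $|\bar\bz_j|>b$ for $j\in\bar\bS$ and $|\bz^*_j|>b$ for $j\in\bS^*\setminus\bar\bS$, which the first and third bounds in (\ref{eq:b-cond-rp}) ensure --- one gets $\theta(\bar\bz_j,\kappa)=\max\{0,\tfrac{\lambda}{b}-\kappa|\bar\bz_j-b|\}$ for $j\in\bar\bS$ and the bound $\max\{0,\tfrac{\lambda}{b}-\kappa b\}$ for $j\in\bH\setminus\bar\bS$. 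Summing over $\bH$ and applying Cauchy--Schwarz, this produces the numerator sum of the claimed inequality plus $\kappa\|\bz^*-\bar\bz\|_2^2$; moving an $\eta\|\bz^*-\bar\bz\|_2^2$ term to the left (valid as $\kappa\le\eta<2\eta_0^2$) gives the prefactor $\tfrac{1}{2\eta_0^2-\eta}$.

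It then remains to bound the perturbation term $2\langle\bD_{\bH}^{\top}(\bT^{\top}\bT-\bI)(\bD\bar\bz-\bx),(\bz^*-\bar\bz)_{\bH}\rangle$, which by Cauchy--Schwarz is at most $2\big(\|\bD_{\bH}^{\top}(\bT^{\top}\bT-\bI)\bD\bar\bz\|_2+\|\bD_{\bH}^{\top}(\bT^{\top}\bT-\bI)\bx\|_2\big)\|\bz^*-\bar\bz\|_2$. Writing each entry of $\bD_{\bH}^{\top}(\bT^{\top}\bT-\bI)\bv$ as $\langle\bT\bD^{i},\bT\bv\rangle-\langle\bD^{i},\bv\rangle$ and invoking the $\ell^2$-norm preserving property of Definition~\ref{def:L2-norm-preseving} through polarization on the fixed vectors $\bD^{i}\pm\bx$ and $\bD^{i}$ ($i\in\bH$), I would get an approximate-matrix-multiplication bound in which the size of the deviation of $\bD_{\bH}^{\top}\bT^{\top}\bT$ from $\bD_{\bH}^{\top}$ is of order $\|\bD\|_F\,\varepsilon$ with $\varepsilon=\sqrt{\tfrac{c}{m}\log\tfrac{4}{\delta}}\le\tfrac12$ once $m\ge 4c\log\tfrac{4}{\delta}$; since $\bD\bar\bz=\bD_{\bH}\bar\bz_{\bH}$, this controls the $\bD\bar\bz$ term after pulling out $\|\bar\bz\|_2$. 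Finally $\|\bar\bz\|_2\le M_1=(\|\bx\|_2+\sqrt{\bar L(\bz^{(0)})})/\eta_0$ follows from the optimality $\bar L(\bar\bz)\le\bar L(\bz^{(0)})$ (hence $\|\bar\bD\bar\bz-\bar\bx\|_2\le\sqrt{\bar L(\bz^{(0)})}$), together with $\ell^2$-norm preservation and $\sigma_{\min}(\bD_{\bar\bS})\ge\eta_0$; inserting $\|\bD\bar\bz\|_2\le\sigma_{\max}(\bD)M_1$ and $\|\bx\|_2\le 1$ collects the perturbation into $2\|\bD\|_F M_1\sqrt{\tfrac{c}{m}\log\tfrac{4}{\delta}}\,(\sigma_{\max}(\bD)+1)$, and dividing through by $2\eta_0^2-\eta$ gives the stated bound. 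A union bound over the finitely many ($\cO(|\bH|)$) norm-preservation events --- each failing with probability exponentially small in $m$ --- gives the success probability $1-\delta$.

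The main obstacle is this last step, and in particular the fact that $\bar\bz$, hence $\bD\bar\bz$ and the residual $\bar\bD\bar\bz-\bar\bx$, is itself a function of the random $\bT$, so Definition~\ref{def:L2-norm-preseving} --- which concerns a \emph{fixed} vector --- cannot be applied to these quantities directly. The remedy is to apply the norm-preserving property only to the $\bT$-independent vectors (the columns of $\bD_{\bH}$ and $\bx$), thereby controlling the sketched Gram matrix $\bD_{\bH}^{\top}\bT^{\top}\bT\bD_{\bH}$ and the sketched cross term $\bD_{\bH}^{\top}\bT^{\top}\bT\bx$ uniformly over all $\bar\bz$, and to bound $\|\bar\bz\|_2$ separately by $M_1$; reconciling the union bound and constants with the stated $m\ge 4c\log\tfrac{4}{\delta}$ is the delicate bookkeeping. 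Aside from this, the argument is a transcription of the proofs of Theorem~\ref{theorem::suboptimal-optimal} and Theorem~\ref{theorem::optimal-rma}, with the matrix-approximation error there replaced by the random-projection error controlled via Definition~\ref{def:L2-norm-preseving}.
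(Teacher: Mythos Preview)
Your proposal is correct and follows essentially the same route as the paper: the paper packages the random-projection error into a separate approximate-matrix-multiplication lemma (Lemma~\ref{lemma::matrix-prod-projection}, stating $\|\bA\bT^{\top}\bT\bB-\bA\bB\|\le\|\bA\|_F\|\bB\|_F\sqrt{\tfrac{c}{m}\log\tfrac{4}{\delta}}$ for fixed $\bA,\bB$), applies it with $\bA=\bD^{\top}$ and $\bB\in\{\bD\bar\bz,\bx\}$, bounds $\|\bar\bz\|_2\le M_1$ from $\bar L(\bar\bz)\le\bar L(\bz^{(0)})$, and then reuses the proof of Theorem~\ref{theorem::suboptimal-optimal} verbatim---exactly your plan, with your polarization argument being precisely how that lemma is proved. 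Your explicit concern about applying norm preservation to the $\bT$-dependent vector $\bD\bar\bz$ is well taken; the paper does not address it and simply invokes the lemma as stated, so your remedy (apply it only to the fixed objects $\bD^{\top},\bD,\bx$ and pull $\|\bar\bz\|_2$ out afterwards) is in fact the more careful version of the same argument.
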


Similar to the analysis for PGD-RMA, we let $\bS = {\rm supp}({\bz}^{(0)})$. Combining Theorem~\ref{theorem::suboptimal-optimal} and Theorem~\ref{theorem::optimal-rdr}, we have the bounded gap between the sub-optimal solution to the dimensionality-reduced problem (\ref{eq:l0-sparse-appro-rdr}) and the globally optimal solution ${\bz}^*$ to the original problem (\ref{eq:l0-sparse-appro}).
\begin{MyTheorem}\label{theorem::suboptimal-optimal-rp}
(\textit{Sub-optimal solution to the dimensionality-reduced problem (\ref{eq:l0-sparse-appro-rdr}) is close to the globally optimal solution to the original problem})
Choose ${\bz}^{(0)}$ such that $\|\bx - {\bar \bD} {\bz}^{(0)}\|_2 \le 1$, and $s > \max\{2|\bS|, \frac{2(1+{\lambda}|\bS|)}{\lambda \eta}\}$, if ${\bar \bD}_{\bS}$ is nonsingular, then the sequence $\{{\bz}^{(t)}\}_t$ generated by PDG for the dimensionality-reduced problem (\ref{eq:l0-sparse-appro-rdr}) converges to a critical point of ${\bar L}(\bz)$, denoted by $\hat {\bar \bz}$. Let $\hat \bS = {\rm supp}(\hat {\bar \bz})$, $\bF = ({\hat \bS} \setminus \bar \bS) \cup (\bar \bS \setminus {\hat \bS})$, $\bH = \bar \bS \cup \bS^*$. Suppose ${\bar \bD}_{\hat \bS \cup \bar \bS}$ is not singular with $\kappa_0 \triangleq \sigma_{\min}({\bar \bD}_{\hat \bS \cup \bar \bS}) > 0$, and $\kappa_0^2 > \kappa > 0$; $\bD_{\bH}$ is not singular with $\eta_0 \triangleq \sigma_{\min}(\bD_{\bH}) > 0$, $2\eta_0^2 > \eta > 0$. If $\bT$ satisfies the $\ell^2$-norm preserving property in Definition~\ref{def:L2-norm-preseving}, $m \ge 4c\log{\frac{4}{\delta}}$, then with probability at least $1 - \delta$,
\begin{small}\begin{align}\label{eq:optimal-rp}
&\|{\bz}^*-{\hat {\bar \bz}}\|_2 \le b_1 + b_2,
\end{align}\end{small}%
where
\begin{small}\begin{align*}
&b_1 = \frac{\big(\sum\limits_{j \in \bF \cap \hat \bS} (\max\{0,\frac{\lambda}{b} - {\kappa} |{\hat {\bar \bz}}_j - b|\})^2 +
\sum\limits_{j \in \bF \setminus \hat \bS} (\max\{0,\frac{\lambda}{b} - {\kappa} b\})^2 \big)^{\frac{1}{2}}}{2\kappa_0^2-\kappa} \\
&b_2 = \frac{1}{2\eta_0^2-\eta}\bigg(\big(\sum\limits_{j \in {\bH} \cap \bar \bS} (\max\{0,\frac{\lambda}{b} - {\kappa} |{\bar \bz}_j - b| \})^2 + \nonumber \\
&\sum\limits_{j \in {\bH} \setminus \bar \bS} (\max\{0, \frac{\lambda}{b} - {\kappa} b\})^2 \big)^{\frac{1}{2}} + 2\|\bD\|_F{M_1}\sqrt{\frac{c}{m}\log{\frac{4}{\delta}}} (\sigma_{\max}(\bD)+1) \bigg)
\end{align*}
\end{small}%
$M_0 = \frac{\|\bx\|_2 + \sqrt{{\bar L}({\bz}^{(0)})}} {\eta_0}$, and $b$ satisfies
\begin{small}\begin{align}\label{eq:b-cond-rp}
&0< b < \min\{\min_{j \in {\bar \bS}} | {\bar \bz}_j|, \max_{k \notin {\bar \bS}} \frac{\lambda }{ (\frac{\partial {\bar Q}}{\partial {\bz_k}}|_{\bz =  {\bar \bz}}-\lambda)_{+}},
\min_{j \in {\bS^*}} | {\bz_j}^*|, \max_{k \notin {\bS^*}} \frac{\lambda}{(\frac{\partial Q}{\partial {\bz_k}}|_{\bz = {\bz}^*}-\lambda)_{+}}, \min_{j \in {\hat \bS}} | {\hat {\bar \bz}}_j|,
\max_{k \notin {\hat \bS}} \frac{\lambda }{ (\frac{\partial {\bar Q}}{\partial {\bz_k}}|_{\bz =  {\hat {\bar \bz}}}-\lambda)_{+}} \}
\end{align}\end{small}%
\end{MyTheorem}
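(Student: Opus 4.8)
\textit{Proof proposal.} The statement is a combination of Theorem~\ref{theorem::suboptimal-optimal} and Theorem~\ref{theorem::optimal-rdr} glued by the triangle inequality, so the plan is to route $\|\bz^*-\hat{\bar\bz}\|_2$ through the globally optimal solution $\bar\bz$ of the dimensionality-reduced problem~(\ref{eq:l0-sparse-appro-rdr}):
\begin{small}\begin{align*}
\|\bz^*-\hat{\bar\bz}\|_2 \;\le\; \|\bz^*-\bar\bz\|_2 + \|\bar\bz-\hat{\bar\bz}\|_2 .
\end{align*}\end{small}
The first summand is exactly the quantity bounded by Theorem~\ref{theorem::optimal-rdr}, which gives $b_2$ with probability at least $1-\delta$; the second summand is bounded by applying Theorem~\ref{theorem::suboptimal-optimal} to the reduced objective $\bar L$, i.e.\ with the pair $(\bx,\bD)$ replaced throughout by $(\bar\bx,\bar\bD)$, which gives $b_1$ deterministically. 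Summing the two bounds yields the claim; since randomness enters only through Theorem~\ref{theorem::optimal-rdr}, the final estimate holds on the same probability-$(1-\delta)$ event.

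What remains is to verify that the hypotheses of the two invoked results are in force. For the step bounding $\|\bar\bz-\hat{\bar\bz}\|_2$ I would observe that Lemma~\ref{lemma::shrinkage-sufficient-decrease}, Lemma~\ref{lemma::PGD-convergence}, Lemma~\ref{lemma::equivalence-to-capped-l1} and Theorem~\ref{theorem::suboptimal-optimal} transfer verbatim to $\bar L$: they use only that the objective is a squared residual plus $\lambda\|\cdot\|_0$ with a design matrix of column norms at most $1$ and a signal of norm at most $1$, a normalization that may be imposed on the reduced data $(\bar\bx,\bar\bD)$ exactly as in the paragraph preceding Lemma~\ref{lemma::shrinkage-sufficient-decrease} since $\|\cdot\|_0$ is scale-invariant. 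Granting $\|\bar\bx-\bar\bD\bz^{(0)}\|_2\le 1$ and the step-size lower bound, Lemma~\ref{lemma::shrinkage-sufficient-decrease} gives support shrinkage and monotone decrease of $\{\bar L(\bz^{(t)})\}_t$; nonsingularity of $\bar\bD_{\bS}$ then lets Lemma~\ref{lemma::PGD-convergence} produce the critical point $\hat{\bar\bz}$ of $\bar L$; and with $\kappa_0=\sigma_{\min}(\bar\bD_{\hat\bS\cup\bar\bS})>0$, $\kappa_0^2>\kappa>0$ and the portion of~(\ref{eq:b-cond-rp}) involving $\hat{\bar\bz}$ and $\bar\bz$, Lemma~\ref{lemma::equivalence-to-capped-l1} makes both $\hat{\bar\bz}$ and $\bar\bz$ local solutions of the capped-$\ell^1$ surrogate of $\bar L$, so Theorem~\ref{theorem::suboptimal-optimal} delivers $\|\bar\bz-\hat{\bar\bz}\|_2\le b_1$. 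For the step bounding $\|\bz^*-\bar\bz\|_2$, the hypotheses of Theorem~\ref{theorem::optimal-rdr} ($\bD_{\bH}$ nonsingular with $\eta_0=\sigma_{\min}(\bD_{\bH})>0$, $2\eta_0^2>\eta>0$, $\bT$ satisfying the $\ell^2$-norm preserving property, $m\ge 4c\log\frac{4}{\delta}$, and the portion of~(\ref{eq:b-cond-rp}) involving $\bar\bz$ and $\bz^*$) are exactly those assumed here, so it applies off the shelf.

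The only fiddly point — and hence what I would call the main obstacle, though it is a mild one — is the bookkeeping of the threshold $b$: one must check that the single value displayed in~(\ref{eq:b-cond-rp}) is simultaneously admissible for both applications above. This is immediate, because that value is the minimum over all the constraints appearing in the two $b$-conditions — those tied to $\bz^*$ and $\bar\bz$ coming from Theorem~\ref{theorem::optimal-rdr}, together with the extra ones tied to $\hat{\bar\bz}$ needed for the Theorem~\ref{theorem::suboptimal-optimal}-application — and each entry of this minimum is strictly positive (the support-minimum entries because the indexed coordinates are nonzero by definition of the support, the gradient-ratio entries because a vanishing denominator is read as $+\infty$). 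Beyond this, no new estimate is needed: the statement is a corollary of results already proved, and I do not expect any step harder than the verification just sketched.
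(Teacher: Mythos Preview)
Your proposal is correct and follows exactly the approach the paper takes: the paper states this result as the combination of Theorem~\ref{theorem::suboptimal-optimal} (applied to the reduced problem $\bar L$) and Theorem~\ref{theorem::optimal-rdr}, joined by the triangle inequality through $\bar\bz$, and does not supply a separate proof beyond that remark. Your handling of the hypotheses and the $b$-bookkeeping is also in line with how the paper sets up the combined condition~(\ref{eq:b-cond-rp}).
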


The detailed proofs of the theorems and lemmas are included in Section~\ref{sec::supplementary}. Note that we slightly abuse the notation of $\bF$, $\hat \bS$, $\kappa$ and $\kappa_0$ in the analysis for PGD-RMA and PGD-RDR with no confusion. To the best of our knowledge, our theoretical results are among the very few results for the provable randomized efficient algorithms for the nonsmooth and nonconvex $\ell^0$ sparse approximation problem.
\section{Conclusions}
We propose to use proximal gradient descent (PGD) to obtain a sub-optimal solution to the $\ell^{0}$ sparse approximation problem. Our theoretical analysis renders the bound for the $\ell^{2}$-distance between the sub-optimal solution and the globally optimal solution, under conditions weaker than Restricted Isometry Property (RIP). To the best of our knowledge, this is the first time that such gap between sub-optimal solution and globally optimal solution is obtained under our mild conditions. Moreover, we propose provable randomized algorithms, namely proximal gradient descent via Randomized Matrix Approximation (PGD-RMA) and proximal gradient descent via Random Dimension Reduction (PGD-RDR), to accelerate the ordinary optimization by PGD.
\bibliography{mybib}

\section{Proofs}
\label{sec::supplementary}
\subsection{Proof of Lemma~\ref{lemma::shrinkage-sufficient-decrease}}
\begin{proof}
We prove this Lemma by mathematical induction.

When $t=1$, we first show that ${\rm supp} ({\bz}^{(1)}) \subseteq {\rm supp} ({\bz}^{(0)})$, i.e. the support of $\bz$ shrinks after the first iteration. To see this,
$\tilde {\bz}^{(t)} = {\bz}^{(t-1)} - \frac{2}{{\tau}s} ({\bD^\top}{\bD}{{\bz}^{(t-1)}}-{\bD^\top}{\bx})$.

Since $\|\bx - \bD {\bz}^{(t-1)}\|_2^2 \le 1$, let $\bg^{(t-1)} = - \frac{2}{{\tau}s} ({\bD^\top}{\bD}{{\bz}^{(t-1)}}-{\bD^\top}{\bx})$, then
\begin{small}\begin{align*}
&|\tilde {\bz_j}^{(t)}| \le \|\bg^{(t-1)}\|_{\infty} \le \frac{2}{{\tau}s} \|{\bD^\top}({\bD}{{\bz}^{(t-1)}}-{\bx})\|_{\infty} \le \frac{2}{{\tau}s}
\end{align*}\end{small}
where $j$ is the index for any zero element of ${\bz}^{(t-1)}$, $1 \le j \le n, j \notin {\rm supp}({\bz}^{(t-1)})$. Now $|{\tilde {\bz_j}^{(t)}}| < \sqrt{\frac{2\lambda}{{\tau}s}}$, and it follows that ${\bz_j}^{(t)} = 0$ due to the update rule (\ref{eq:l0-sa-proximal-step2}). Therefore, the zero elements of ${\bz}^{(t-1)}$ remain unchanged in ${\bz}^{(t)}$, and ${\rm supp} ({\bz}^{(t)}) \subseteq {\rm supp} ({\bz}^{(t-1)})$ for $t=1$.

Let $Q_{\bS}(\by) = \|\bx - \bD_{\bS} \by\|_2^2$ for $\by \in \R^{|\bS|}$, then we show that $s > 2|\bS|$ is the Lipschitz constant for the gradient of function $Q_{\bS}$. To see this, we have
\begin{small}\begin{align*}
\sigma_{\max}({\bD_{\bS}^\top}{\bD_{\bS}}) = \big(\sigma_{\max}(\bD_{\bS})\big)^2 \le {\rm Tr}({\bD_{\bS}^\top}{\bD_{\bS}}) = |\bS|
\end{align*}\end{small}
Also, $\nabla Q_{\bS}(\by) = 2 ({\bD_{\bS}^\top}{\bD_{\bS}}{\by}-{\bD_{\bS}^\top}{\bx})$, and
\begin{small}\begin{align}\label{eq:lemma1-proof-seg1}
& \|\nabla Q_{\bS}(\by) - \nabla Q_{\bS}(\bz)\|_2 = 2 \| {\bD_{\bS}^\top}{\bD_{\bS}}({\by}-{\bz})\|_2 \\
&\le 2 \sigma_{\max}({\bD_{\bS}^\top}{\bD_{\bS}}) \cdot \|({\by}-{\bz})\|_2  \nonumber \\
& \le 2|\bS| \|({\by}-{\bz})\|_2 < s \|({\by}-{\bz})\|_2 \nonumber
\end{align}
\end{small}
Note that when $t=1$, since $${\bz}^{(t)} = \argmin \limits_{\bv \in \R^n} {\frac{{\tau}s}{2}\|\bv - {\tilde {\bz}^{(t)}}\|_2^2 + {\lambda}\|\bv\|_0}$$ we have
\begin{small}\begin{align}\label{eq:lemma1-proof-seg2}
& \frac{{\tau}s}{2}\|{\bz}^{(t)} - {\tilde {\bz}^{(t)}}\|_2^2 + {\lambda}\|{\bz}^{(t)}\|_0 \\
&\le \frac{{\tau}s}{2}\|\frac{\nabla Q({\bz}^{(t-1)})}{{\tau}s}\|_2^2 + {\lambda}\|{\bz}^{(t-1)}\|_0 \nonumber
\end{align}\end{small}
which is equivalent to
\begin{small}\begin{align}\label{eq:lemma1-proof-seg3}
& \langle \nabla Q_{\bS} ({\bz_{\bS}}^{(t-1)}), {\bz_{\bS}}^{(t)} - {\bz_{\bS}}^{(t-1)}\rangle + \frac{{\tau}s}{2} \|{\bz}^{(t)} - {\bz}^{(t-1)}\|_2^2 \\
& + {\lambda}\|{\bz}^{(t)}\|_0 \le {\lambda}\|{\bz}^{(t-1)}\|_0 \nonumber
\end{align}\end{small}
due to the fact that
\begin{small}\begin{align*}
&\langle \nabla Q ({\bz}^{(t-1)}), {\bz}^{(t)} - {\bz}^{(t-1)}\rangle =
\langle \nabla Q_{\bS} ({\bz_{\bS}}^{(t-1)}), {\bz_{\bS}}^{(t)} - {\bz_{\bS}}^{(t-1)}\rangle
\end{align*}\end{small}
Also, since $s$ is the Lipschitz constant for $\nabla Q_{\bS}$,
\begin{small}\begin{align}\label{eq:lemma1-proof-seg4}
& Q_{\bS}({\bz_{\bS}}^{(t)}) \le  Q_{\bS}({\bz_{\bS}}^{(t-1)}) + \langle \nabla Q_{\bS} ({\bz_{\bS}}^{(t-1)}), {\bz_{\bS}}^{(t)} - {\bz_{\bS}}^{(t-1)}\rangle \\
& + \frac{s}{2} \|{\bz_{\bS}}^{(t)} - {\bz_{\bS}}^{(t-1)}\|_2^2 \nonumber
\end{align}
\end{small}
Combining (\ref{eq:lemma1-proof-seg3}) and (\ref{eq:lemma1-proof-seg4}) and note that $\|{\bz_{\bS}}^{(t)} - {\bz_{\bS}}^{(t-1)}\|_2 = \|{\bz}^{(t)} - {\bz}^{(t-1)}\|_2$, $Q_{\bS}({\bz_{\bS}}^{(t)}) = Q({\bz}^{(t)})$ and $Q_{\bS}({\bz_{\bS}}^{(t-1)}) = Q({\bz}^{(t-1)})$, we have
\begin{small}\begin{align}\label{eq:lemma1-proof-seg5}
&Q({\bz}^{(t)}) + {\lambda}\|{\bz}^{(t)}\|_0 \le Q({\bz}^{(t-1)}) + {\lambda}\|{\bz}^{(t-1)}\|_0 \\
& - \frac{(\tau-1)s}{2} \|{\bz}^{(t)} - {\bz}^{(t-1)}\|_2^2 \nonumber
\end{align}
\end{small}
Now (\ref{eq:support-shrinkage}) and (\ref{eq:l0graph-proximal-sufficient-decrease}) are verified for $t = 1$. Suppose (\ref{eq:support-shrinkage}) and (\ref{eq:l0graph-proximal-sufficient-decrease}) hold for all $t \ge t_0$ with $t_0 \ge 1$.
Since $\{L({\bz}^{(t)})\}_{t=1}^{t_0}$ is decreasing, we have
\begin{small}\begin{align*}
&L({\bz}^{(t_0)}) = \|\bx - \bD {\bz}^{(t_0)}\|_2^2 + {\lambda}\|{\bz}^{(t_0)}\|_0 \\
&\le \|\bx - \bD {\bz}^{(0)}\|_2^2 + {\lambda}\|{\bz}^{(0)}\|_0 \le 1 + {\lambda}|\bS|  \nonumber
\end{align*}\end{small}
which indicates that $\|\bx - \bD {\bz}^{(t_0)}\|_2 \le \sqrt{1+{\lambda}|\bS|}$.
When $t = t_0+1$,
\begin{small}\begin{align*}
&|\tilde {\bz_j}^{(t)}| \le \|\bg^{(t-1)}\|_{\infty} \le \frac{2}{{\tau}s} \|{\bD^\top}({\bD}{{\bz}^{(t-1)}}-{\bx})\|_{\infty} \\
&\le \frac{2}{{\tau}s} \sqrt{1+{\lambda}|\bS|}
\end{align*}\end{small}
where $j$ is the index for any zero element of ${\bz}^{(t-1)}$, $1 \le j \le n, j \notin {\rm supp}({\bz}^{(t-1)})$. Now $|{\tilde {\bz_j}^{(t)}}| < \sqrt{\frac{2\lambda}{{\tau}s}}$, and it follows that and ${\bz_j}^{(t)} = 0$ due to the update rule in (\ref{eq:l0-sa-proximal-step2}). Therefore, the zero elements of ${\bz}^{(t-1)}$ remain unchanged in ${\bz_j}^{(t)}$, and ${\rm supp} ({\bz}^{(t)}) \subseteq {\rm supp} ({\bz}^{(t-1)}) \subseteq \bS$ for $t=t_0+1$. Moreover, similar to the case when $t = 1$, we can derive (\ref{eq:lemma1-proof-seg3}), (\ref{eq:lemma1-proof-seg4}) and (\ref{eq:lemma1-proof-seg5}), so that the support shrinkage (\ref{eq:support-shrinkage}) and decline of the objective (\ref{eq:l0graph-proximal-sufficient-decrease}) are verified for $t=t_0+1$. It follows that the claim of this lemma holds for all $t \ge 1$.

Since the sequence $\{L({\bz}^{(t)})\}_t$ is deceasing with lower bound $0$, it must converge.
\end{proof}

\subsection{Proof of Lemma~\ref{lemma::PGD-convergence}}
\begin{proof}
We first prove that the sequences $\{{\bz}^{(t)}\}_t$ is bounded for any $1 \le i \le n$. In the proof of Lemma~\ref{lemma::shrinkage-sufficient-decrease}, it is proved that
\begin{small}\begin{align*}
&L({\bz}^{(t)}) = \|\bx - \bD {\bz}^{(t)}\|_2^2 + {\lambda}\|{\bz}^{(t)}\|_0 \\
&\le \|\bx - \bD {\bz}^{(0)}\|_2^2 + {\lambda}\|{\bz}^{(0)}\|_0 \le 1 + {\lambda}|\bS|  \nonumber
\end{align*}\end{small}
for $t \ge 1$. Therefore, $\|\bx - \bD {\bz}^{(t)}\|_2 \le \sqrt{1 + {\lambda}|\bS|}$ and it follows that $\|\bD {\bz}^{(t)}\|_2^2 \le (1 + \sqrt{1+{\lambda}|\bS|})^2$. Since ${\rm supp}({\bz}^{(t)}) \subseteq \bS$ for $t \ge 0$ due to Lemma~\ref{lemma::shrinkage-sufficient-decrease},
\begin{small}\begin{align*}
&(1 + \sqrt{1+{\lambda}|\bS|})^2 \ge \|\bD {\bz}^{(t)}\|_2  = \|\bD_{\bS} {\bz_{\bS}}^{(t)}\|_2 \\
&\ge \sigma_{\min}({\bD_{\bS}}^{\top}{\bD_{\bS}})
\| {\bz_{\bS}}^{(t)}\|_2^2 = \sigma_{\min}({\bD_{\bS}}^{\top}{\bD_{\bS}}) \|{\bz}^{(t)}\|_2^2
\end{align*}\end{small}
Since $\bD_{\bS}$ is nonsingular, we have $\sigma_{\min}({\bD_{\bS}}^{\top}{\bD_{\bS}})  = (\sigma_{\min}(\bD))^2$ and it follows that ${\bz}^{(t)}$ is bounded: $\|{\bz}^{(t)}\|_2^2 \le \frac{(1 + \sqrt{1+{\lambda}|\bS|})^2}{(\sigma_{\min}(\bD))^2}$.
In addition, since $\ell^{0}$-norm function $\|\cdot\|_0$ is a semi-algebraic function, therefore, according to Theorem $1$ in \cite{BoltePAL2014}, $\{{\bz}^{(t)}\}_t$ converges to a critical point of $L(\bz)$, denoted by $\hat {\bz}$.
\end{proof}

\subsection{Proof of Lemma~\ref{lemma::equivalence-to-capped-l1}}
\begin{proof}
Let $\hat \bv = 2{\bD^{\top}}({\bD} {\hat {\bz}} - \bx ) + {\dot \bR} (\hat {\bz};b)$, . For for $j \in {\hat \bS}$, since $\hat {\bz}$ is a critical point of $L(\bz) = {\|\bx - \bD \bz\|_2^2 + {\lambda}\|{\bz}\|_0}$. then $\frac{\partial Q}{\partial {\bz_j}} |_{\bz = \hat {\bz}} = 0$ because $\frac{\partial \|\bz\|_0}{\partial {\bz_j}} |_{\bz = \hat {\bz}} = 0$ . Note that $\min_{j \in {\hat \bS}} |\hat {\bz_j}| > b$, so
$\frac{\partial \bR}{\partial \bz_j}|_{\bz = \hat {\bz}} = 0$, and it follows that $\hat \bv_j = 0$.

For $j \notin {\hat \bS}$, since $\frac{d {R}}{d \bz_j} (\hat \bz_j+;b) = \frac{\lambda}{b}$ and $\frac{d {R}}{d \bz_j} (\hat \bz_j-;b) = -\frac{\lambda}{b}$,
$\frac{\lambda}{b} > \max_{j \notin {\hat \bS}} |\frac{\partial Q}{\partial {\bz_j}}|_{\bz = \hat {\bz}}|$, we can choose the $j$-th element of ${\dot \bR} (\hat {\bz};b)$ such that $\hat \bv_j = 0$. Therefore, $\|\hat \bv\|_2 = 0$, and $\hat {\bz}$ is a local solution to the problem (\ref{eq:capped-l1-problem}).

Now we prove that ${\bz}^*$ is also a local solution to (\ref{eq:capped-l1-problem}). Let $\bv^* = 2{\bD^{\top}}({\bD} {\bz}^* - \bx ) + {\dot \bR} ({\bz}^*;b)$, and $Q$ is defined as before. For $j \in \bS^*$, since ${\bz}^*$ is the global optimal solution to problem (\ref{eq:l0-sparse-appro}), we also have $\frac{\partial Q}{\partial {\bz_j}} |_{\bz = {\bz}^*}  = 0$. If it is not the case and $\frac{\partial Q}{\partial {\bz_j}} |_{\bz = {\bz}^*}  \neq 0$, then we can change ${\bz_j}$ by a small amount in the direction of the gradient $\frac{\partial Q}{\partial {\bz_j}}$ at the point ${\bz} = {\bz}^*$ and still make ${\bz_j} \neq 0$, leading to a smaller value of the objective $L(\bz)$.

Note that $\min_{j \in {\bS^*}} | {\bz_j}^*| > b$, so
$\frac{\partial \bR}{\partial \bz_j}|_{\bz = \hat {\bz}} = 0$, and it follows that $\bv_j^* = 0$.

For $j \notin \bS^*$, since $\frac{\lambda}{b} > \max_{j \notin {\hat \bS}} |\frac{\partial Q}{\partial {\bz_j}}|_{\bz =  {\bz}^*}|$, we can choose the $j$-th element of ${\dot \bR} ({\bz}^*;b)$ such that $\bv_j^* = 0$. It follows that $\|\bv^*\|_2 = 0$, and ${\bz}^*$ is also a local solution to the problem (\ref{eq:capped-l1-problem}).
\end{proof}

\subsection{Proof of Theorem~\ref{theorem::suboptimal-optimal}}
\begin{proof}
According to Lemma~\ref{lemma::equivalence-to-capped-l1}, both ${\hat \bz}$ and ${\bz}^*$ are local solutions to problem (\ref{eq:capped-l1-problem}). In the following text, let $\bbeta_{\bI}$ indicates a vector whose elements are those of $\bbeta$ with indices in $\bI$. Let $\Delta = {\bz}^*-{\hat \bz}$, $\tilde \Delta = {\dot \bP}({\bz}^*) - {\dot \bP}(\hat {\bz})$. By Lemma~\ref{lemma::equivalence-to-capped-l1}, we have
\begin{small}\begin{align*}
&\| 2{\bD^{\top}}{\bD}\Delta  + \tilde \Delta\|_2  = 0
\end{align*}\end{small}
It follows that
\begin{small}\begin{align*}
& 2\Delta^{\top}{\bD^{\top}}{\bD}\Delta  + \Delta^{\top} \tilde \Delta  \le \|\Delta\|_2 \| 2{\bD^{\top}}{\bD}\Delta  + \tilde \Delta\|_2 = 0
\end{align*}\end{small}
Also, by the proof of Lemma~\ref{lemma::equivalence-to-capped-l1}, for $k \in {\hat \bS} \cap \bS^*$, since $({\bD^{\top}}{\bD}\Delta)_k = 0$ we have $\tilde \Delta_k = 0$. We now present another property on any nonconvex function $P$ using the degree of nonconvexity in Definition~\ref{def::degree-nonconvexity}: $\theta(t,\kappa):= \sup_s \{ -{\rm sgn}(s-t) ({\dot P}(s;b) - {\dot P}(t;b)) - \kappa |s-t|\}$ on the regularizer $\bP$. For any $s,t \in \R$, we have
\begin{small}\begin{align*}
& -{\rm sgn}(s-t) \big( {\dot P}(s;b) - {\dot P}(t;b) \big) - \kappa |s-t| \le \theta(t,\kappa)
\end{align*}\end{small}
by the definition of $\theta$. It follows that
\begin{small}\begin{align}\label{eq:suboptimal-optimal-seg1}
& \theta(t,\kappa) |s-t| \ge -(s-t)\big( {\dot P}(s;b) - {\dot P}(t;b)  \big) - \kappa (s-t)^2 \nonumber \\
& -(s-t)\big( {\dot P}(s;b) - {\dot P}(t;b)  \big) \le \theta(t,\kappa) |s-t| + \kappa (s-t)^2
\end{align}\end{small}
Applying (\ref{eq:suboptimal-optimal-seg1}) with $P = P_j$ for $j = 1,\ldots,n$, we have
\begin{small}\begin{align}\label{eq:suboptimal-optimal-seg2}
&2\Delta^{\top}{\bD^{\top}}{\bD}\Delta \le -\Delta^{\top} {\tilde \Delta} = -\Delta_{\bF}^{\top} {\tilde \Delta_{\bF }}
-\Delta_{{\hat \bS} \cap \bS^*}^{\top} {\tilde \Delta_{{\hat \bS} \cap \bS^* }} \nonumber \\
& \le |{\bz_{\bF}}^* - {\hat \bz_{\bF}}|^{\top} \theta(\hat \bz_{\bF},\kappa) + \kappa \|{\bz_{\bF}}^* - {\hat \bz_{\bF}}\|_2^2 + \|\Delta_{{\hat \bS} \cap \bS^*}\|_2
{\|{\tilde \Delta_{{\hat \bS} \cap \bS^* }}\|_2}\nonumber \\
& \le \|\theta(\hat \bz_{\bF},\kappa)\|_2 \|{\bz_{\bF}}^* - {\hat \bz_{\bF}}\|_2 + \kappa \|{\bz_{\bF}}* - {\hat \bz_{\bF}}\|_2^2 + \|\Delta\|_2{\|{\tilde \Delta_{{\hat \bS} \cap \bS^* }}\|_2}\nonumber \\
&\le \|\theta({\hat \bz_{\bF}},\kappa)\|_2 \|\Delta\|_2 + \kappa \|\Delta\|_2^2 + \|\Delta\|_2{\|{\tilde \Delta_{{\hat \bS} \cap \bS^* }}\|_2}
\end{align}\end{small}
On the other hand, $\Delta^{\top}{\bD^{\top}}{\bD}\Delta \ge \kappa_0^2 \|\Delta\|_2^2$. It follows from (\ref{eq:suboptimal-optimal-seg2}) that
\begin{small}\begin{align*}
& 2\kappa_0^2 \|\Delta\|_2^2 \le \|\theta(\hat \bz_{\bF},\kappa)\|_2 \|\Delta\|_2 + \kappa \|\Delta\|_2^2 + \|\Delta\|_2 {\|{\tilde \Delta_{{\hat \bS} \cap \bS^* }}\|_2}
\end{align*}\end{small}
When $\|\Delta\|_2 \neq 0$, we have
\begin{small}\begin{align}\label{eq:suboptimal-optimal-seg3}
&2\kappa_0^2 \|\Delta\|_2 \le \|\theta(\hat \bz_{\bF},\kappa)\|_2 + \kappa \|\Delta\|_2 + {\|{\tilde \Delta_{{\hat \bS} \cap \bS^* }}\|_2}\nonumber \\
& \Rightarrow \|\Delta\|_2 \le \frac{\|\theta(\hat \bz_{\bF},\kappa)\|_2 + {\|{\tilde \Delta_{{\hat \bS} \cap \bS^* }}\|_2}}{2\kappa_0^2-\kappa}
\end{align}\end{small}

According to the definition of $\theta$, it can be verified that $\theta(t,\kappa) = \max\{0,\frac{\lambda}{b} - {\kappa} |t - b| \}$ for $|t| > b$, and $\theta(0,\kappa) = \max\{0, \frac{\lambda}{b} - {\kappa} b\}$.  Therefore,
\begin{small}\begin{align}\label{eq:suboptimal-optimal-seg4}
&\|\theta(\hat \bz_{\bF},\kappa)\|_2
 = \big(\sum\limits_{j \in \bF \cap \hat \bS} (\max\{0,\frac{\lambda}{b} - {\kappa} |\hat \bz_j - b| \})^2 +
\sum\limits_{j \in \bF \setminus \hat \bS} (\max\{0, \frac{\lambda}{b} - {\kappa} b\})^2 \big)^{\frac{1}{2}}
\end{align}\end{small}
And it follows that
\begin{small}\begin{align}\label{eq:suboptimal-optimal-seg5}
&\|\Delta\|_2 \le \frac{1}{2\kappa_0^2-\kappa}\bigg(\big(\sum\limits_{j \in \bF \cap \hat \bS} (\max\{0,\frac{\lambda}{b} - {\kappa} |\hat \bz_j - b| \})^2 +
\sum\limits_{j \in \bF \setminus \hat \bS} (\max\{0, \frac{\lambda}{b} - {\kappa} b\})^2 \big)^{\frac{1}{2}} \bigg)
\end{align}\end{small}
This proves the result of this theorem.
\end{proof} 

\subsection{Proof of Theorem~\ref{theorem::optimal-rma}}
\begin{proof}
By the proof of Lemma~\ref{lemma::PGD-convergence}, we have
\begin{small}\begin{align*}
&\| 2{{{{\tilde \bD}}}^{\top}}{{{\tilde \bD}}}{{\tilde \bz}} - 2{{{{\tilde \bD}}}^{\top}}\bx  + {\dot \bR}(\tilde {\bz})\|_2  = 0
\end{align*}\end{small}
It follows that
\begin{small}\begin{align}\label{eq:optimal-rp-seg1}
&\| 2{{{\bD}}^{\top}}{{\bD}}{{\tilde \bz}} -2{{{\bD}}^{\top}}\bx  + {\dot \bR}(\tilde {\bz})\|_2  \nonumber \\
&= \| 2{{{\bD}}^{\top}}{{\bD}}{{\tilde \bz}} - 2{{{{\tilde \bD}}}^{\top}}{{{\tilde \bD}}}{{\tilde \bz}}
+2{{{{\tilde \bD}}}^{\top}}{{{\tilde \bD}}}{{\tilde \bz}} -2{{{\bD}}^{\top}}\bx
+ 2{{{{\tilde \bD}}}^{\top}}\bx - 2{{{{\tilde \bD}}}^{\top}}\bx
+ {\dot \bR}(\tilde {\bz})\|_2  \nonumber \\
& \le \| 2{{{\bD}}^{\top}}{{\bD}}{{\tilde \bz}} - 2{{{{\tilde \bD}}}^{\top}}{{{\tilde \bD}}}{{\tilde \bz}}\|_2 + \|2{{{\bD}}^{\top}}\bx - 2{{{{\tilde \bD}}}^{\top}}\bx\|_2
+ \| 2{{{{\tilde \bD}}}^{\top}}{{{\tilde \bD}}}{{\tilde \bz}} -2{{{{\tilde \bD}}}^{\top}}\bx + {\dot \bR}(\tilde {\bz})\|_2 \nonumber \\
&=\| 2{{{\bD}}^{\top}}{{\bD}}{{\tilde \bz}} - 2{{{{\tilde \bD}}}^{\top}}{{{\tilde \bD}}}{{\tilde \bz}}\|_2 + \|2{{{\bD}}^{\top}}\bx - 2{{{{\tilde \bD}}}^{\top}}\bx\|_2 \nonumber \\
&\le 2\|{{{\bD}}^{\top}} ({{\bD}}- {{{\tilde \bD}}}) {{\tilde \bz}}\|_2
+ 2\| ({{\bD}}-{{{\tilde \bD}}})^{\top} {{{\tilde \bD}}} {{\tilde \bz}}\|_2
+ 2\|{{{\bD}}^{\top}}\bx - {{{{\tilde \bD}}}^{\top}}\bx\|_2
\end{align}\end{small}
By ${\tilde L}({{\tilde \bz}}) \le {\tilde L}({\bz}^{(0)})$, we have $\|{{\tilde \bz}}\|_2 \le M_0$. By Lemma~\ref{lemma::D-approx}, with probability at least $1-6e^{-p}$, $\|\bD - {\tilde \bD}\|_2 \le C_{k,k_0}$. It follows from (\ref{eq:optimal-rp-seg1}) that
\begin{small}\begin{align*}
&\| 2{{{\bD}}^{\top}}{{\bD}}{{\tilde \bz}} -2{{{\bD}}^{\top}}\bx + {\dot \bR}(\tilde {\bz})\|_2 \nonumber \\
& \le 2\sigma_{\max}(\bD) C_{k,k_0}M_0 + 2C_{k,k_0} (\sigma_{\max}(\bD) + C_{k,k_0})M_0 + 2C_{k,k_0}\|\bx\|_2\nonumber \\
&= 2C_{k,k_0}M_0 (2\sigma_{\max}(\bD) + C_{k,k_0}) + 2C_{k,k_0}\|\bx\|_2
\end{align*}\end{small}
Also, by the proof of Lemma~\ref{lemma::PGD-convergence},
\begin{small}\begin{align*}
&\| 2{{{\bD}}^{\top}}{{\bD}} {{\bz}^*} -2{{{\bD}}^{\top}}\bx + {\dot \bR}({{\bz}^*})\|_2  = 0
\end{align*}\end{small}

Let $\Delta = {{\bz}^*} - {{\tilde \bz}}$, $\tilde \Delta = {\dot \bR}({\bz}^*) - {\dot \bR}(\tilde {\bz})$,
\begin{small}\begin{align*}
&\| 2{{{\bD}}^{\top}}{{\bD}} \Delta  + \tilde \Delta\|_2  \le 2C_{k,k_0}M_0 (2\sigma_{\max}(\bD) + C_{k,k_0}) + 2C_{k,k_0}\|\bx\|_2
\end{align*}\end{small}
Now following the proof of Theorem~\ref{theorem::suboptimal-optimal}, we have
\begin{small}\begin{align}\label{eq:optimal-rp-seg2}
&\|{\bz}^*-{\tilde \bz}\|_2 = \|\Delta\|_2 \nonumber \\
&\le \frac{1}{2\tau_0^2-\tau}\bigg(\big(\sum\limits_{j \in {\bG} \cap \tilde \bS} (\max\{0,\frac{\lambda}{b} - {\kappa} |{\tilde \bz}_j - b| \})^2 + \nonumber \\
&\sum\limits_{j \in {\bG} \setminus \tilde \bS} (\max\{0, \frac{\lambda}{b} - {\kappa} b\})^2 \big)^{\frac{1}{2}} + 2C_{k,k_0}M_0 (2\sigma_{\max}(\bD) + C_{k,k_0}) + 2C_{k,k_0}\|\bx\|_2 \bigg)
\end{align}\end{small}%
\end{proof}

\subsection{Proof of Theorem~\ref{theorem::optimal-rdr}}
We have the following lemma before proving Theorem~\ref{theorem::optimal-rdr}.

\begin{MyLemma}\label{lemma::matrix-prod-projection}
Suppose $\bT$ satisfies the $\ell^2$-norm preserving property in Definition~\ref{def:L2-norm-preseving}. If $m \ge 4c\log{\frac{4}{\delta}}$, then for any matrix $\bA \in \R^{p \times d}$, $\bB \in \R^{d \times q}$, with probability at least $1 - \delta$,
\begin{small}\begin{align}\label{eq:matrix-prod-projection}
&\|\bA \bT^{\top} \bT \bB - \bA \bB\| \le \|\bA\|_F \|\bB\|_F \sqrt{\frac{c}{m}\log{\frac{4}{\delta}}}
\end{align}\end{small}%
\end{MyLemma}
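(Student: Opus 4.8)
The plan is to reduce the matrix inequality to a family of scalar inner-product preservation estimates and then reassemble, the reduction being a polarization argument off of Definition~\ref{def:L2-norm-preseving}. First I would record the polarization consequence of the norm-preserving property: for fixed unit vectors $\bu,\bv\in\R^d$, combining the identity $\langle\bT\bu,\bT\bv\rangle = \tfrac14\left(\|\bT(\bu+\bv)\|_2^2-\|\bT(\bu-\bv)\|_2^2\right)$ with the analogous identity for $\langle\bu,\bv\rangle$, applying the $\ell^2$-norm preserving bound to the two vectors $\bu\pm\bv$, and using the parallelogram law $\|\bu+\bv\|_2^2+\|\bu-\bv\|_2^2=4$, one gets on the intersection of the two norm-preservation events that $|\langle\bT\bu,\bT\bv\rangle-\langle\bu,\bv\rangle|\le c_0\varepsilon$ for an absolute constant $c_0$ (and $c_0=1$ after a slightly sharper bookkeeping). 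Rescaling, for arbitrary $\bu,\bv$ this reads $|\langle\bT\bu,\bT\bv\rangle-\langle\bu,\bv\rangle|\le c_0\varepsilon\|\bu\|_2\|\bv\|_2$.

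Next I would apply this entrywise. Denoting by $\bu_i$ the $i$-th row of $\bA$ (viewed as a column) and by $\bv_j$ the $j$-th column of $\bB$, we have $(\bA\bT^\top\bT\bB)_{ij}=\langle\bT\bu_i,\bT\bv_j\rangle$ and $(\bA\bB)_{ij}=\langle\bu_i,\bv_j\rangle$, so on the relevant events each entry of the error matrix is at most $c_0\varepsilon\|\bu_i\|_2\|\bv_j\|_2$ in absolute value. Summing squares, $\|\bA\bT^\top\bT\bB-\bA\bB\|_F^2\le c_0^2\varepsilon^2\sum_{i,j}\|\bu_i\|_2^2\|\bv_j\|_2^2=c_0^2\varepsilon^2\|\bA\|_F^2\|\bB\|_F^2$, and since the spectral norm is dominated by the Frobenius norm the same bound holds for $\|\cdot\|$ regardless of which matrix norm it denotes. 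Taking $\varepsilon=\sqrt{\tfrac{c}{m}\log\tfrac{4}{\delta}}$ is legitimate because $m\ge 4c\log\tfrac4\delta$ forces $\varepsilon\le\tfrac12$, which is the range required by the definition, and this choice produces exactly the right-hand side of the lemma.

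The delicate point, and what I expect to be the main obstacle, is the probability accounting. A naive union bound over all the directions $\bu_i\pm\bv_j$ invoked above costs a factor proportional to $pq$ inside the logarithm, which is weaker than the stated $\log\tfrac4\delta$. To get the clean, size-independent dependence I would instead route through the rank structure: writing thin SVDs $\bA=\bU_A\bSigma_A\bV_A^\top$, $\bB=\bU_B\bSigma_B\bV_B^\top$ and using $\|\bD_1\bM\bD_2\|_F\le\|\bM\|_2\|\bD_1\|_F\|\bD_2\|_F$ for diagonal $\bD_1,\bD_2$ gives $\|\bA\bT^\top\bT\bB-\bA\bB\|_F\le\|\bA\|_F\|\bB\|_F\,\|\bV_A^\top(\bT^\top\bT-\bI)\bU_B\|_2$, so it suffices to control the inner-product estimate uniformly over unit vectors in the fixed low-dimensional subspaces spanned by $\bV_A$ and $\bU_B$; a standard $\varepsilon$-net argument then needs a number of norm-preservation events that depends only on the ranks, not on $p,q$. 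In particular, in the rank-one situations in which this lemma is actually applied (a vector against a matrix of the form $\bD$), only the two polarization events of a single pair are needed, and their failure probability is precisely $4e^{-m\varepsilon^2/c}=\delta$ with the above $\varepsilon$, accounting for the ``$4$'' in the bound. The remainder of the argument — the polarization identity and the summation over entries — is routine.
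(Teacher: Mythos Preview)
The paper does not actually supply a proof of this lemma; it simply states that it ``can be proved using the definition of the $\ell^2$-norm preserving property in the same way that Lemma~6 in \citep{Sarlos2006-large-matrix-random-projection} or \citep{Zhang2016-sparse-random-convex-concave} is proved.'' Your polarization-then-entrywise argument is exactly that standard route, so the approaches coincide.

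Your discussion of the probability accounting is in fact more careful than the paper's statement. You are right that a naive union bound over all row--column pairs would introduce a $pq$ factor inside the logarithm, and your SVD reduction to $\|\bV_A^\top(\bT^\top\bT-\bI)\bU_B\|_2$ (the inequality $\|\bSigma_A \bM \bSigma_B\|_F \le \|\bM\|_2\|\bSigma_A\|_F\|\bSigma_B\|_F$ follows from $|M_{ij}|\le\|\bM\|_2$) correctly replaces this by a rank-dependent cost. The constant ``$4$'' in the stated bound is really only exact in the rank-one situation --- which, as you observe, is precisely how the lemma is invoked in the proof of Theorem~\ref{theorem::optimal-rdr} (with $\bB = \bD\bar\bz$ or $\bB=\bx$). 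So your caveat is well placed, and the proposal is sound for the purposes of this paper.
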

Lemma~\ref{lemma::matrix-prod-projection} can be proved using the definition of the $\ell^2$-norm preserving property in the same way that Lemma $6$ in \citep{Sarlos2006-large-matrix-random-projection} or \citep{Zhang2016-sparse-random-convex-concave} is proved.

\begin{proof}[Proof of Theorem~\ref{theorem::optimal-rdr}]
By the proof of Lemma~\ref{lemma::PGD-convergence}, we have
\begin{small}\begin{align*}
&\| 2{{{{\bar \bD}}}^{\top}}{{{\bar \bD}}}{{\bar \bz}} - 2{{{{\bar \bD}}}^{\top}}{\bar \bx}  + {\dot \bR}(\bar {\bz})\|_2  = 0
\end{align*}\end{small}
It follows that
\begin{small}\begin{align}\label{eq:optimal-rp-seg1}
&\| 2{{{\bD}}^{\top}}{{\bD}}{{\bar \bz}} -2{{{\bD}}^{\top}}\bx  + {\dot \bR}(\bar {\bz})\|_2  \nonumber \\
&= \| 2{{{\bD}}^{\top}}{{\bD}}{{\bar \bz}} - 2{{{{\bar \bD}}}^{\top}}{{{\bar \bD}}}{{\bar \bz}}
+2{{{{\bar \bD}}}^{\top}}{{{\bar \bD}}}{{\bar \bz}} -2{{{\bD}}^{\top}}\bx
+ 2{{{{\bar \bD}}}^{\top}}{\bar\bx} - 2{{{{\bar \bD}}}^{\top}}{\bar\bx}
+ {\dot \bR}(\bar {\bz})\|_2  \nonumber \\
& \le \| 2{{{\bD}}^{\top}}{{\bD}}{{\bar \bz}} - 2{{{{\bar \bD}}}^{\top}}{{{\bar \bD}}}{{\bar \bz}}\|_2 + \|2{{{\bD}}^{\top}}\bx - 2{{{{\bar \bD}}}^{\top}}{\bar\bx}\|_2
+ \| 2{{{{\bar \bD}}}^{\top}}{{{\bar \bD}}}{{\bar \bz}} -2{{{{\bar \bD}}}^{\top}}{\bar\bx} + {\dot \bR}(\bar {\bz})\|_2 \nonumber \\
& = \| 2{{{\bD}}^{\top}}{{\bD}}{{\bar \bz}} - 2{{{{\bar \bD}}}^{\top}}{{{\bar \bD}}}{{\bar \bz}}\|_2 + \|2{{{\bD}}^{\top}}\bx - 2{{{{\bar \bD}}}^{\top}}{\bar\bx}\|_2 \nonumber \\
& = 2 \|{{{\bD}}^{\top}}(\bI - \bT^{\top}{\bT}){{\bD}} {\bar \bz} \|_2 + 2 \|{{{\bD}}^{\top}}(\bI - \bT^{\top}{\bT}) \bx\|_2
\end{align}\end{small}%

By ${\bar L}({{\bar \bz}}) \le {\bar L}({\bz}^{(0)})$, we have $\|{{\bar \bz}}\|_2 \le M_1$. According to Lemma~\ref{lemma::matrix-prod-projection}, with probability at least $1 - \delta$,
\begin{small}\begin{align}\label{eq:optimal-rp-seg2}
&2 \|{{{\bD}}^{\top}}(\bI - \bT^{\top}{\bT}){{\bD}} {\bar \bz} \|_2 + 2 \|{{{\bD}}^{\top}}(\bI - \bT^{\top}{\bT}) \bx\|_2 \nonumber \\
&\le 2\|\bD\|_F\sigma_{\max}(\bD)M_1\sqrt{\frac{c}{m}\log{\frac{4}{\delta}}} + 2\|\bD\|_FM_1\sqrt{\frac{c}{m}\log{\frac{4}{\delta}}} \nonumber \\
&\le 2\|\bD\|_F{M_1}\sqrt{\frac{c}{m}\log{\frac{4}{\delta}}} (\sigma_{\max}(\bD)+1)
\end{align}\end{small}
Combining (\ref{eq:optimal-rp-seg1}) and (\ref{eq:optimal-rp-seg2}), we have
\begin{small}\begin{align*}
&\| 2{{{\bD}}^{\top}}{{\bD}}{{\bar \bz}} -2{{{\bD}}^{\top}}\bx + {\dot \bR}(\bar {\bz})\|_2 \le 2\|\bD\|_F{M_1}\sqrt{\frac{c}{m}\log{\frac{4}{\delta}}} (\sigma_{\max}(\bD)+1)
\end{align*}\end{small}
Also, by the proof of Lemma~\ref{lemma::PGD-convergence},
\begin{small}\begin{align*}
&\| 2{{{\bD}}^{\top}}{{\bD}} {{\bz}^*} -2{{{\bD}}^{\top}}\bx + {\dot \bR}({{\bz}^*})\|_2  = 0
\end{align*}\end{small}
Let $\Delta = {{\bz}^*} - {{\bar \bz}}$, $\bar \Delta = {\dot \bR}({\bz}^*) - {\dot \bR}(\bar {\bz})$,
\begin{small}\begin{align*}
&\| 2{{{\bD}}^{\top}}{{\bD}} \Delta  + \bar \Delta\|_2  \le 2\|\bD\|_F{M_1}\sqrt{\frac{c}{m}\log{\frac{4}{\delta}}} (\sigma_{\max}(\bD)+1)
\end{align*}\end{small}
Now following the proof of Theorem~\ref{theorem::suboptimal-optimal}, with probability at least $1 - \delta$,
\begin{small}\begin{align}\label{eq:optimal-rp-seg3}
&\|{\bz}^*-{\bar \bz}\|_2 = \|\Delta\|_2 \nonumber \\
&\le \frac{1}{2\eta_0^2-\eta}\bigg(\big(\sum\limits_{j \in {\bH} \cap \bar \bS} (\max\{0,\frac{\lambda}{b} - {\kappa} |{\bar \bz}_j - b| \})^2 + \nonumber \\
&\sum\limits_{j \in {\bH} \setminus \bar \bS} (\max\{0, \frac{\lambda}{b} - {\kappa} b\})^2 \big)^{\frac{1}{2}} + 2\|\bD\|_F{M_1}\sqrt{\frac{c}{m}\log{\frac{4}{\delta}}} (\sigma_{\max}(\bD)+1) \bigg)
\end{align}\end{small}
\end{proof}


\end{document}